\newtheorem{thm}{Theorem}[section]
\newtheorem{lem}[thm]{Lemma}
\newtheorem{conj}[thm]{Conjecture}
\newtheorem{prop}[thm]{Proposition}
\newtheorem{example}[thm]{Example}
\newtheorem{property}[thm]{Property}
\numberwithin{equation}{section}
\begin{document}

\title{On a Conjecture of Brouwer Involving the Connectivity of Strongly Regular Graphs}
\author{Sebastian M. Cioab\u{a}\footnote{Department of Mathematical Sciences, University of Delaware, Newark, DE 19707-2553, USA},\,
Kijung Kim\footnote{Department of Mathematics, Pusan National University, Busan 609-735, South Korea}
\,and Jack H. Koolen\footnote{Department of Mathematics, POSTECH, Pohang 790-785, South Korea}}
\footnotetext{E-mail addresses: {\tt cioaba@math.udel.edu} (S.M. Cioab\u{a}), {\tt knukkj@pusan.ac.kr} (K. Kim) and {\tt koolen@postech.ac.kr} (J.H. Koolen)}

\date{December 28, 2011}
\maketitle

\centerline{\textit{Dedicated to the 60th birthday of Andries E. Brouwer}}

\begin{abstract}
In this paper, we study a conjecture of Andries E. Brouwer from 1996
regarding the minimum number of vertices of a strongly regular graph whose removal disconnects the graph into non-singleton components.

We show that strongly regular graphs constructed from copolar spaces and from the more general spaces called $\Delta$-spaces are counterexamples to Brouwer's Conjecture. Using J.I. Hall's characterization of finite reduced copolar spaces, we find that the triangular graphs $T(m)$, the symplectic graphs $Sp(2r,q)$ over the field $\mathbb{F}_q$ (for any $q$ prime power), and the strongly regular graphs constructed from the hyperbolic quadrics $O^{+}(2r,2)$ and from the elliptic quadrics $O^{-}(2r,2)$ over the field $\mathbb{F}_2$, respectively, are counterexamples to Brouwer's Conjecture. For each of these graphs, we determine precisely the minimum number of vertices whose removal disconnects the graph into non-singleton components. While we are not aware of an analogue of Hall's characterization theorem for $\Delta$-spaces, we show that complements of the point graphs of certain finite generalized quadrangles are point graphs of $\Delta$-spaces and thus, yield other counterexamples to Brouwer's Conjecture.

We prove that Brouwer's Conjecture is true for many families of strongly regular graphs including the conference graphs, the generalized quadrangles $GQ(q,q)$ graphs, the lattice graphs, the Latin square graphs, the strongly regular graphs with smallest eigenvalue $-2$ (except the triangular graphs) and the primitive strongly regular graphs with at most 30 vertices except for few cases.

We leave as an open problem determining the best general lower bound for the minimum size of a disconnecting set of vertices of a strongly regular graph, whose removal disconnects the graph into non-singleton components.
\end{abstract}

\section{Introduction}\label{sec:intro}

Strongly regular graphs are interesting mathematical objects with numerous connections to combinatorics, algebra, geometry, coding theory and computer science among others (see \cite{BCN,BH1, BL,Cam,GR,S1,VW}). According to Cameron \cite{Cam} (see also \cite{Cam2}), {\em strongly regular graphs form an important class of graphs which lie somewhere between the highly structured and the apparently random}.

A graph $G$ is a strongly regular graph with parameters $v, k, \lambda$ and $\mu$ (shorthanded $(v,k,\lambda,\mu)$-SRG for the rest of the paper) if it has $v$ vertices, is $k$-regular, any two adjacent vertices have exactly $\lambda$ common neighbors and any two non-adjacent vertices have exactly $\mu$ common neighbors.

A set of vertices $S$ of a connected, non-complete graph $G$ is called a disconnecting set (also known as a vertex separator, separating set, vertex cut or vertex cutset in the literature) if removing the vertices of $S$ and the edges incident with them will make the resulting graph disconnected. The vertex-connectivity of a connected and non-complete graph $G$ equals the minimum size of a disconnecting set of $G$. This is a well studied combinatorial invariant which is related to important algebraic parameters of $G$ such as its eigenvalues. The connection between eigenvalues and vertex-connectivity is one of the classical results of spectral graph theory which originated with Fiedler \cite{F} and has been investigated in various contexts by many researchers (see, for example, Alon \cite{Al}, Haemers \cite{H1}, Helmberg, Mohar, Poljak and Rendl \cite{HMPR}, Krivelevich and Sudakov \cite{KS} or Tanner \cite{Tan}).

Brouwer and Mesner \cite{BM} used Seidel's \cite{S} classification of strongly regular graphs with eigenvalues at least $-2$ to prove that the vertex-connectivity of any connected strongly regular graph of degree $k$ equals its degree $k$. Moreover, Brouwer and Mesner showed that the only disconnecting sets of size $k$ are the sets of all neighbors of a given vertex of the graph. Their work was extended recently by Brouwer and Koolen \cite{BK} who proved the same result for distance-regular graphs thus, settling an open problem of Brouwer from \cite{Br1}. This work is a contribution towards solving two important open problems in the area which are the conjectures of Godsil and respectively Brouwer stating that the edge-connectivity (respectively the vertex-connectivity) of any connected class of an association scheme equals its degree (see \cite{Br1} for more details).

In view of these results, a natural problem is to determine the minimum size of a disconnecting set $S$ a connected $(v,k,\lambda,\mu)$-SRG that does not contain the neighborhood of any vertex $x\notin S$. This is equivalent to finding the minimum size  of a disconnecting set
whose removal disconnects the graph into non-singleton components. We denote by $\kappa_2(G)$ the minimum size of such a disconnecting set of a connected graph $G$ if such sets exists. Note that for some graphs $G$ (such as complete bipartite graphs or some strongly regular graphs from Section \ref{sec:general}), such disconnecting sets do not exist. This parameter has been investigated for other interesting classes of graphs such as minimal Cayley graphs (see the work of Hamidoune, Llad\'{o} and Serra \cite{HLS} for example). In the case of a connected $(v,k,\lambda,\mu)$-SRG, a natural candidate for the value of $\kappa_2(G)$ would be $2k-\lambda-2$ as this equals the size of the neighborhood of an edge of the graph. This was actually formulated as a conjecture in 1996 by Andries Brouwer \cite{Br1}.
\begin{conj}[Brouwer \cite{Br1}]
Let $G$ be a connected $(v,k,\lambda,\mu)$-SRG, and let $S$ be a
disconnecting set of $G$ whose removal disconnects $G$ into non-singleton components. Show that $|S|\geq 2k-\lambda-2$.
\end{conj}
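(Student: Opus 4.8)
The plan is to treat $2k-\lambda-2$ as the natural target by first recognizing it as the size of the external neighborhood of an edge. Indeed, if $xy$ is an edge then the set $\big(N(x)\cup N(y)\big)\setminus\{x,y\}$ has exactly $(k-1)+(k-1)-\lambda=2k-\lambda-2$ vertices, and deleting it leaves $\{x,y\}$ as a non-singleton component (whenever $v>2k-\lambda$, so that the remainder is non-empty). Thus $2k-\lambda-2$ is always an upper bound for $\kappa_2(G)$, and the content of the conjecture is the matching lower bound. As a first reduction I would invoke the Brouwer--Mesner theorem \cite{BM}: the only disconnecting sets of size $k$ are vertex-neighborhoods, and each of those isolates a single vertex; hence any $S$ that splits $G$ into non-singleton components is not a vertex-neighborhood, which already yields the crude baseline $|S|\ge k+1$. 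The task is then to push this baseline all the way up to $2k-\lambda-2$.

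Next I would fix a smallest non-singleton component $A$ of $G-S$ and choose an edge $xy$ inside $A$ (which exists since $A$ is connected with $|A|\ge 2$), writing $B=V\setminus(A\cup S)$ for the union of the remaining components. Since there are no edges between $A$ and $B$, every neighbor of $x$ or of $y$ lies in $A\cup S$. In the extremal case $A=\{x,y\}$ the entire external edge-neighborhood is forced into $S$ and the bound follows at once, so the real work is the case $|A|\ge 3$. Here I would attempt two complementary global estimates. The first is eigenvalue interlacing applied to the three-part partition $\{A,S,B\}$: because the $A$--$B$ blocks of the adjacency matrix vanish, the $3\times 3$ quotient matrix is tridiagonal, and its eigenvalues interlace the spectrum $k>r>\cdots>s$ of $G$; this yields a quadratic inequality binding $|A|$ and $|S|$ to $r$ and $s$. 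The second is a direct double count of the edges between $A$ and $S$, namely $e(A,S)=k|A|-2e(A)$, combined with whatever bound the parameters $\lambda$ and $\mu$ impose on the number of neighbors in $A$ of a single separator vertex. Optimizing either estimate over the admissible sizes and internal densities of $A$ is what I would hope delivers $|S|\ge 2k-\lambda-2$.

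The main obstacle, and the step I expect to be genuinely hard, is controlling large components, particularly when $A$ is a sparse, coclique-like set. In that regime the local edge argument gives nothing, while both global tools weaken: a single separator vertex may have many neighbors inside a large coclique (a quantity only loosely constrained by $\lambda$ and $\mu$), so the double count permits a small $S$ to absorb all of the $A$--$S$ edges, and the interlacing inequality similarly loses force as $|A|$ grows. This is precisely the configuration in which the conjectured bound is most fragile, and---consistently with the counterexamples announced in the abstract---it suggests that when $G$ carries rich geometric substructure (as for the graphs arising from copolar and $\Delta$-spaces) a large non-singleton component can be cut away using fewer than $2k-\lambda-2$ vertices. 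Hence I would expect the clean inequality to fail in full generality, so that a correct uniform statement must take the form of a weaker lower bound, with equality $2k-\lambda-2$ holding only for the families where no such cheaply separable substructure is present.
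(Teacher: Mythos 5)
The statement you were given is a conjecture that this paper \emph{refutes} rather than proves, so no uniform proof of $|S|\geq 2k-\lambda-2$ exists and your program cannot be completed as stated. Your closing instinct that the clean inequality must fail in general is correct, and the positive tools you list --- the Brouwer--Mesner baseline, eigenvalue interlacing applied to the partition $\{A,S,B\}$ (which is exactly the Haemers-type bound of Lemma~\ref{haemersbnd}), and inclusion--exclusion estimates for $|N(A)|$ --- are precisely the ones the paper uses to verify the conjecture where it does hold (conference graphs, $GQ(q,q)$, graphs with $k\geq 2\lambda+1$ and $|\lambda-\mu|\leq 1$, lattice and Latin square graphs, and most small primitive cases). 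To that extent your skeleton aligns with the paper's positive results.

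The genuine gap is in your diagnosis of where the bound breaks. You predict the fragile regime is a large, sparse, coclique-like component $A$; in fact, in every counterexample the paper exhibits the cheap cut isolates a \emph{small clique}. For a connected induced subgraph $A$ on $t\geq 3$ vertices one only gets $|N(A)|\geq 2k-\lambda-t$ (take an edge $xy$ inside $A$ and subtract the $t-2$ remaining vertices of $A$ from $N(\{x,y\})$), and equality is attained by the ``hyperbolic lines'' of copolar and $\Delta$-spaces (Section~\ref{copolarsec}). Concretely, in the triangular graph $T(m)$ with $m\geq 6$ the triangle $A=\{\{1,2\},\{1,3\},\{2,3\}\}$ has $|N(A)|=3m-9=2k-\lambda-3$ and $T(m)\setminus(A\cup N(A))\cong T(m-3)$ is connected with no singletons, so $\kappa_2(T(m))\leq 2k-\lambda-3$; the same mechanism with cliques of size $q+1$ gives $\kappa_2(Sp(2r,q))=2k-\lambda-(q+1)$, so the deficit can be made arbitrarily large. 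Note also that the step you do need even for the upper bound --- verifying that the complement of $A\cup N(A)$ contains no singleton components --- is not automatic and is controlled in the paper by the condition $\mu(s+1)/s<k$ together with the last part of Lemma~\ref{lowerbounddiscset}. So the correct resolution is not a weakened lower bound for sparse components, but the recognition that neighborhoods of cliques of size $t\geq 3$ arising from the underlying geometry can undercut $2k-\lambda-2$ by exactly $t-2$.
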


In this paper, we use algebraic, combinatorial and geometric methods to study Brouwer's Conjecture. We show the conjecture is false in general by proving that strongly regular graphs constructed from copolar spaces and from $\Delta$-spaces (see Section \ref{copolarsec} for details) are counterexamples. Using J.I.Hall's characterization of finite reduced copolar spaces (see \cite{Hall1} and Section \ref{copolarsec}), we present four infinite families of counterexamples to Brouwer's Conjecture: the triangular graphs $T(m)$ (see Section~\ref{sec:triangular}), the symplectic graphs $Sp(2r,q)$ over $\mathbb{F}_q$ (see Section~\ref{sec:symplectic} for their definition and more details), the strongly regular graphs obtained from the hyperbolic quadrics $O^{+}(2r,2)$ over $\mathbb{F}_2$ (see Section \ref{sec:hyperbolic}) and the strongly regular graphs obtained from the elliptic quadrics $O^{-}(2r,2)$ over $\mathbb{F}_2$ (see Section~\ref{sec:elliptic}). For each graph above, we determine precisely the minimum number of vertices whose removal disconnects the graph into non-singleton components. We also discuss counterexamples coming from $\Delta$-spaces and show that the complements of the point graphs of certain generalized quadrangles yield other counterexamples to Brouwer's Conjecture.

It is well known that a strongly regular graph is either a conference graph (which is a $(4t+1,2t,t-1,t)$-SRG) or all its eigenvalues are integers (see \cite[Section 10.3]{GR}).  Results of Bose and Neumaier (see Bose \cite{Bose}, Neumaier \cite{Neumaier} or \cite{BH1}) imply that for a fixed negative integer $-m$, there are finitely many strongly regular graphs with smallest eigenvalue $-m$ that are not obtained from an orthogonal array $OA(t,n)$ or as a block graph of a Steiner system.

Motivated by these facts, we show that Brouwer's Conjecture is true for many interesting strongly regular graphs including the conference graphs, the generalized quadrangles $GQ(q,q)$ graphs, the $OA(2,n)$ lattice graphs, the $OA(3,n)$ Latin square graphs, the strongly regular graphs with smallest eigenvalue $-2$ (except the triangular graphs) and the primitive strongly regular graphs with at most 30 vertices except for few cases. We plan to investigate the status of Brouwer's Conjecture for general $OA(t,n)$ strongly regular graphs (with $t\geq 4$) and for block graphs of Steiner systems in a future work.

Our paper is organized as follows. In Section~\ref{sec:general}, we give some sufficient conditions stated only in terms of $v, k,\lambda$ and $\mu$ under which Brouwer's Conjecture is true. As a consequence of our results in Section~\ref{sec:general} we show that many families of strongly regular graphs including the conference graphs (and consequently Paley graphs), the strongly regular graphs obtained from generalized quadrangles $GQ(q,q)$ or the complements of the symplectic graphs over $\mathbb{F}_q$ satisfy Brouwer's Conjecture. In Section~\ref{copolarsec}, we give the definition of copolar spaces and show that strongly regular graphs constructed from such spaces are counterexamples to Brouwer's Conjecture. J.I. Hall \cite{Hall1} has classified all these strongly regular graphs and these counterexamples are described in detail in Sections \ref{sec:triangular}, \ref{sec:symplectic}, \ref{sec:hyperbolic} and \ref{sec:elliptic}. For each such counterexample $G$, we compute the exact value of $\kappa_2(G)$. In Section~\ref{copolarsec}, we also describe $\Delta$-spaces which are a generalization of copolar spaces and show that strongly regular graphs obtained from such spaces are counterexamples to Brouwer's Conjecture. We are not aware of a classification of the strongly regular graphs arising from finite $\Delta$-spaces, but we can present some examples of such strongly regular graphs, namely the complements of the point-graph of certain finite generalized quadrangles, which are also counterexamples to Brouwer's Conjecture. In Section~\ref{sec:triangular}, we describe the triangular graphs $T(m)$ and determine $\kappa_2(T(m))$. In Section~\ref{sec:symplectic}, we describe the symplectic graphs $Sp(2r,q)$ over $\mathbb{F}_q$ and compute $\kappa_2(Sp(2r,q))$. In Section~\ref{sec:hyperbolic}, we describe the strongly regular graphs obtained from the hyperbolic quadric $O^{+}(2r,2)$ over $\mathbb{F}_2$ and determine $\kappa_2(O^{+}(2r,2))$. In Section~\ref{sec:elliptic}, we describe the strongly regular graphs obtained from the elliptic quadric $O^{-}(2r,2)$ over $\mathbb{F}_2$ and calculate $\kappa_2(O^{-}(2r,2))$. In Section~\ref{sec:lattice} and \ref{sec:oa}, we show that Brouwer's Conjecture is true for the $OA(2,n)$ lattice graphs and the $OA(3,n)$ Latin square graphs, respectively. In Section~\ref{sec:primitive}, we determine the status of Brouwer's Conjecture for the primitive strongly regular graphs with at most 30 vertices. We conclude our paper with some final remarks and open questions in Section~\ref{sec:final}.

\section{Disconnecting sets in strongly regular graphs}\label{sec:general}

Our graph theoretic notation is standard (for undefined notions see \cite{BH1,GR}). The adjacency matrix of a graph $G$ has its rows and columns indexed after the vertices of the graph and its $(u,v)$-th entry equals $1$ if $u$ and $v$ are adjacent and $0$ otherwise. If $G$ is a connected $k$-regular graph of order $v$, it is known (see \cite{BCN,BH1,GR}) that $k$ is the largest eigenvalue of the adjacency matrix of $G$ and its multiplicity is $1$. In this case, let $k=\theta_1>\theta_2\geq \dots \geq \theta_v$ denote the eigenvalues of the adjacency matrix of $G$.
If $G$ is a $(v,k,\lambda,\mu)$-SRG, then it is known that $G$ has exactly three distinct eigenvalues; let $k>\theta_2>\theta_v$ be the distinct eigenvalues of $G$, where $\theta_2=\frac{1}{2}\left(\lambda-\mu+\sqrt{(\lambda-\mu)^2+4(k-\mu)}\right)$ and $\theta_v=\frac{1}{2}\left(\lambda-\mu-\sqrt{(\lambda-\mu)^2+4(k-\mu)}\right)$ (see \cite{BCN,BH1,GR} for details). Thus, $\theta_2+\theta_v=\lambda-\mu$ and $\theta_2\theta_v=\mu-k$ which imply $\lambda=k+\theta_2+\theta_v+\theta_2\theta_v$ and $\mu=k+\theta_2\theta_v$.

If $G$ is a graph with vertex set $V(G)$ and $X\subset V(G)$, let $N(X)=\{y\notin X: y\sim x \text{ for some } x \in X\}$ denote the neighborhood of $X$. If $G$ is a $(v,k,\lambda,\mu)$-SRG, then $|N(\{u,v\})|=2k-\lambda-2$ for every edge $uv$ of $G$. Recall that $\kappa_2(G)$ denotes the minimum size of a disconnecting set of $G$ whose removal disconnects the graph into non-singleton components.

Let $G$ be a $(v,k,\lambda,\mu)$-SRG. We say that $G$ is OK if either it has no disconnecting set such that each component has as at least two vertices, or if $\kappa_2(G) = 2k - \lambda - 2$. If $G$ is a $(v,k,\lambda,\mu)$-SRG, then its complement $\overline{G}$ is a $(\overline{v},\overline{k},\overline{\lambda},\overline{\mu})$-SRG, with the following parameters $\overline{v} = v, \overline{k} = v-k-1, \overline{\lambda} = v - 2k
+ \mu -2, \overline{\mu} = v - 2k + \lambda$ (see \cite[p.9]{BCN} or \cite[p.218]{GR}). Our next result shows that graphs with a small number of vertices are OK.

\begin{lem}\label{smallv}
If $G$ is a $(v,k,\lambda,\mu)$-SRG with $v \leq 2k - \lambda + 2$, then $G$ is OK.
\end{lem}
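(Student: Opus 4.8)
The plan is to show that whenever $G$ admits a disconnecting set $S$ whose removal leaves only non-singleton components, one necessarily has $|S| = 2k-\lambda-2$; combined with the defining dichotomy this yields that $G$ is OK. So I would suppose such an $S$ exists (otherwise $G$ is OK by definition). Fix one component of $G-S$ and let $A$ be its vertex set, and let $B$ be the union of the vertex sets of all remaining components, so that $V(G) = A \sqcup S \sqcup B$ with $|A|\ge 2$, $|B|\ge 2$, and no edge joining $A$ to $B$. Since every component of $G-S$ is connected with at least two vertices, both $A$ and $B$ contain an edge.

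First I would establish the upper bound. As the four or more vertices of $A\cup B$ lie outside $S$, we have $|S| = v-|A|-|B| \le v-4$, and the hypothesis $v \le 2k-\lambda+2$ immediately gives $|S| \le (2k-\lambda+2)-4 = 2k-\lambda-2$.

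For the matching lower bound I would use the edge-neighborhood count. Choose an edge $uv$ inside $A$. Because there are no edges between $A$ and $B$, every vertex of $N(\{u,v\})$ lies in $(A\cup S)\setminus\{u,v\}$; since $|N(\{u,v\})| = 2k-\lambda-2$ for any edge of a $(v,k,\lambda,\mu)$-SRG, this forces $2k-\lambda-2 \le |A|+|S|-2$, that is, $|A|+|S| \ge 2k-\lambda$. The identical argument applied to an edge inside $B$ gives $|B|+|S| \ge 2k-\lambda$. Adding these and substituting $|A|+|B| = v-|S|$ yields $v+|S| \ge 4k-2\lambda$, hence $|S| \ge 4k-2\lambda-v \ge 4k-2\lambda-(2k-\lambda+2) = 2k-\lambda-2$, where the final inequality is once more the hypothesis.

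Combining the two bounds shows that every valid disconnecting set has size exactly $2k-\lambda-2$, so $\kappa_2(G) = 2k-\lambda-2$ and $G$ is OK. The argument is short, and the only point requiring care is ensuring that both sides $A$ and $B$ genuinely contain an edge, so that the identity $|N(\{u,v\})| = 2k-\lambda-2$ can be invoked on each side; this is precisely where the non-singleton hypothesis on the components enters, and it is the step I would be most careful to justify. The essential idea is to apply the edge-neighborhood bound separately on the two sides of the cut and sum, which is exactly what converts the hypothesis $v \le 2k-\lambda+2$ into the desired equality.
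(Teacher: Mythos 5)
Your proof is correct, and it is worth noting how it relates to the paper's. The paper argues through the complement: for an edge $xy$ inside a component, the vertices of the other components are common neighbours of $x$ and $y$ in $\overline{G}$, so $\overline{\mu}=v-2k+\lambda$ is at least the size of the rest of $G\setminus S$; assuming every component has at least \emph{three} vertices this gives $v\geq 2k-\lambda+3$, a contradiction. Your inequality $|A|+|S|\geq 2k-\lambda$ is exactly the same count rewritten (it says $|B|\leq v-2k+\lambda=\overline{\mu}$), so the underlying idea is identical. What you do differently is to apply it on both sides of the cut and add, which buys you two things: you obtain the exact value $|S|=2k-\lambda-2$ for \emph{every} admissible disconnecting set rather than only a contradiction, and you handle components of size exactly two, a case the paper's written proof passes over in silence (there one cannot derive a contradiction, since a size-two component only forces $\overline{\mu}\geq 2$, i.e.\ $v\geq 2k-\lambda+2$, which is consistent with the hypothesis; instead one must check, as you do, that any such $S$ has size exactly $2k-\lambda-2$ so that $\kappa_2(G)=2k-\lambda-2$). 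In that respect your argument is the more complete of the two, and it also avoids any appeal to the parameters of the complement.
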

\begin{proof}
If $G$ has a disconnecting set $S$ such that each component of $G\setminus S$ has at
least three vertices, then let $x$ and $y$ be two adjacent vertices in one of such component.
Then $x$ and $y$ are at distance two in $\overline{G}$ and they have at least $3$ common
vertices in $\overline{G}$ (as they are adjacent to all the other vertices in the
other components of $G\setminus S$). This implies  $\overline{\mu} \geq 3$ and hence $v\geq 2k - \lambda +3$, a contradiction.
\end{proof}

We outline here the methods we will use throughout the paper.

Let $G$ be a connected graph. If $S$ is a disconnecting set of $G$ of minimum size such that the components of $G\setminus S$ are not singletons, then let $A$ denote the vertex set of one of the components of $G\setminus S$ of minimum size. By our choice of $A$, we have that $|B|\geq |A|$, where $B:=V(G)\setminus (A\cup S)$. As $S$ is a disconnecting set, it follows that $N(A)\subset S$ and consequently, $|N(A)|\leq |S|$. As also pointed out to us by the referee, note that it is possible for the disconnecting set $S$ to contain a vertex $y$ and its neighborhood $N(y)$ in which case $y\in S$, but $y\notin N(A)$ and thus, $N(A)\neq S$ (see also the last section of the paper for a discussion of such disconnecting sets). 

In order to prove Brouwer's Conjecture is true for a $(v,k,\lambda,\mu)$-SRG $G$ with vertex set $V$ and $v\geq 2k-\lambda+3$, we will show that $|S|\geq 2k-\lambda-2$ for any subset of vertices $A$ with $3\leq |A|\leq \frac{v}{2}$ having the property that $A$ induces a connected subgraph of $G$. In some situations, we will be able to prove the stronger statement that $|N(A)|\geq 2k-\lambda-2$.

In order to show that Brouwer's Conjecture is false for some $(v,k,\lambda,\mu)$-SRG $H$, we will describe a subset of vertices $C$ inducing a connected subgraph of $H$ such that $3\leq |C|\leq v-3$, $|N(C)|\leq 2k-\lambda-3$ and the components of the graph obtained by removing $N(C)$ from $H$ are not singletons.

Throughout the paper, $S$ will be a disconnecting set of $G$, $A$ will stand for a subset of vertices of $G$ that induces a connected subgraph of $G\setminus S$ of smallest order and $B:=V(G)\setminus (A\cup S)$. As before, $N(A)\subset S$ and thus, $|S|\geq |N(A)|$. Let $|A|=a, |B|=b$ and $|S|=s$.

We will use the following result which relates the size of a disconnecting set to the eigenvalues of the graph.
\begin{lem}[Haemers \cite{H1}; Helmberg, Mohar, Poljak and Rendl \cite{HMPR}]\label{haemersbnd}
If $G$ is a connected $k$-regular graph, then
\begin{equation}
|S|\geq \frac{ab}{v}\cdot \frac{4(k-\theta_2)(k-\theta_v)}{(\theta_2-\theta_v)^2}
\end{equation}
\end{lem}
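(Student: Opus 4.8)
The plan is to establish the inequality via an eigenvalue interlacing / quotient matrix argument, which is the standard route for such spectral bounds on disconnecting sets. The setup partitions the vertex set $V(G)$ into three parts, $A$, $S$ and $B$, with $|A|=a$, $|S|=s$ and $|B|=b$. Since $S$ is a disconnecting set separating $A$ from $B$, there are no edges between $A$ and $B$; this is the crucial structural fact I would exploit. Because $G$ is $k$-regular, I would form the $3\times 3$ quotient matrix $Q$ of the adjacency matrix $A(G)$ with respect to this partition, whose row sums are all equal to $k$. The zero block corresponding to the absence of $A$--$B$ edges forces the off-diagonal entries $Q_{AB}$ and $Q_{BA}$ to vanish, and the remaining entries are determined by the edge counts between the parts (each part being $k$-regular overall means the entries in each row sum to $k$).

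The key analytic step is to invoke eigenvalue interlacing for the quotient matrix (Haemers' interlacing theorem for equitable-type partitions). The quotient matrix $Q$ has three eigenvalues that interlace the eigenvalues of $A(G)$, so in particular the largest eigenvalue of $Q$ equals $k$ (from the all-ones right eigenvector) and the smallest eigenvalue $\eta$ of $Q$ satisfies $\eta \geq \theta_v$, while the middle eigenvalue lies between $\theta_2$ and $\theta_v$ in a way I can pin down. First I would write down $\mathrm{tr}(Q)$ and a suitable weighted expression (for instance, evaluating a quadratic form on a vector supported appropriately on $A$ and $B$, orthogonal to the all-ones vector) to produce a direct inequality. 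A clean way to carry this out is to apply Rayleigh-quotient estimates to the test vector that is constant on $A$, constant on $B$, constant on $S$, and orthogonal to $\mathbf{1}$; since $A$ and $B$ see no edges between them, the quadratic form $x^{T} A(G) x$ decouples in a controllable fashion, and comparing against the bound $\theta_2$ on the second eigenvalue yields the stated inequality after substituting the relation $|S| = v - a - b$.

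Concretely, I would choose the vector $x$ taking value $1/a$ on $A$, value $-1/b$ on $B$, and $0$ on $S$, so that $x$ is orthogonal to $\mathbf{1}$. Then $x^{T} A(G) x = \frac{1}{a^2}\,e(A) \cdot 2 + \frac{1}{b^2}\,e(B)\cdot 2$ up to the edges within $A$ and within $B$ (there is no cross term because $e(A,B)=0$), while $x^{T} x = \tfrac{1}{a} + \tfrac{1}{b}$. Bounding the within-part edge contributions using $k$-regularity, and using that every eigenvalue of $A(G)$ other than $k$ is at most $\theta_2$ and at least $\theta_v$ on the subspace orthogonal to $\mathbf{1}$, produces a two-sided estimate. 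Rearranging and solving for $s=v-a-b$ is where the precise numerical factor $\frac{4(k-\theta_2)(k-\theta_v)}{(\theta_2-\theta_v)^2}$ emerges, since this quantity is exactly what one obtains by optimizing the eigenvalue gaps in the quotient analysis.

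I expect the main obstacle to be bookkeeping the edge counts in the quotient matrix correctly and then performing the algebraic simplification that converts the raw interlacing inequality into the clean closed form on the right-hand side. The appearance of the factor $4(k-\theta_2)(k-\theta_v)/(\theta_2-\theta_v)^2$ is not obvious from the quotient matrix directly; it arises from choosing the test vector (or the weighting in the interlacing argument) optimally, and verifying that this choice is indeed extremal requires care. Since the result is stated as a known lemma attributed to Haemers and to Helmberg--Mohar--Poljak--Rendl, I would in practice cite \cite{H1} and \cite{HMPR} for the full derivation rather than reproduce the optimization, but the sketch above indicates the mechanism: the absence of $A$--$B$ edges combined with spectral interlacing of the three-part quotient forces $|S|$ to be large whenever both $a$ and $b$ are large.
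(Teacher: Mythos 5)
The paper does not prove this lemma at all: it is quoted as a known result of Haemers \cite{H1} and Helmberg--Mohar--Poljak--Rendl \cite{HMPR}, so your closing decision to simply cite those sources is exactly what the authors do, and there is no in-paper argument to compare against.

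That said, the sketch you offer in the middle contains a genuine gap worth naming. Your concrete step takes the single test vector $x$ equal to $1/a$ on $A$, $-1/b$ on $B$ and $0$ on $S$, and compares the Rayleigh quotient $x^{T}A(G)x/x^{T}x$ against $\theta_2$ on one side only. Writing $e(A)$ for the number of edges inside $A$ and $e(A,S)$ for the number of edges between $A$ and $S$, this yields $2e(A)/a^{2}+2e(B)/b^{2}\leq\theta_2\left(1/a+1/b\right)$, and after substituting $2e(A)=ka-e(A,S)$, $2e(B)=kb-e(B,S)$ it gives a lower bound on $e(A,S)/a^{2}+e(B,S)/b^{2}$ involving only the single gap $k-\theta_2$. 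No rearrangement of this can produce the factor $(k-\theta_2)(k-\theta_v)$: a one-sided comparison with one eigenvalue cannot generate a bound that depends on both ends of the spectrum. The actual derivation needs $\theta_2$ and $\theta_v$ simultaneously, for instance via the quadratic operator inequality $(A(G)-\theta_2I)(A(G)-\theta_vI)\preceq 0$ restricted to $\mathbf{1}^{\perp}$, i.e.\ $x^{T}A(G)^{2}x\leq(\theta_2+\theta_v)\,x^{T}A(G)x-\theta_2\theta_v\,x^{T}x$ for $x\perp\mathbf{1}$, applied to a test vector carrying a free parameter that is then optimized --- that optimization is precisely where the constant $4(k-\theta_2)(k-\theta_v)/(\theta_2-\theta_v)^{2}$ arises; equivalently one can interlace the full $3\times3$ quotient matrix and use both $\eta_2\leq\theta_2$ and $\eta_3\geq\theta_v$ together with its trace and determinant. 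Your write-up gestures at this (``verifying that this choice is indeed extremal requires care'') but never supplies the two-sided ingredient, so as written the argument would not close without falling back on the cited references.
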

\noindent When applied to a strongly regular graph, the previous result yields the following:
\begin{lem}\label{HL1}
If $G$ is a connected $(v,k,\lambda,\mu)$-SRG, then
\begin{equation}
|S|\geq \frac{ab}{v}\cdot  \frac{4[k^2-(\lambda-\mu)k+\mu-k]}{(\lambda-\mu)^2+4(k-\mu)}=\frac{4ab\mu}{(\lambda-\mu)^2+4(k-\mu)}.
\end{equation}
\end{lem}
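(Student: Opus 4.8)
The plan is to derive Lemma~\ref{HL1} as a direct specialization of Lemma~\ref{haemersbnd} by substituting the known expressions for the nontrivial eigenvalues $\theta_2$ and $\theta_v$ of a strongly regular graph into the general bound. The excerpt has already recorded the two symmetric functions of these eigenvalues, namely $\theta_2+\theta_v=\lambda-\mu$ and $\theta_2\theta_v=\mu-k$, together with the relation $\theta_2-\theta_v=\sqrt{(\lambda-\mu)^2+4(k-\mu)}$; these are exactly the ingredients needed, so no new spectral analysis is required.

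First I would rewrite the three factors appearing in Lemma~\ref{haemersbnd}. For the denominator, I use $(\theta_2-\theta_v)^2=(\theta_2+\theta_v)^2-4\theta_2\theta_v=(\lambda-\mu)^2-4(\mu-k)=(\lambda-\mu)^2+4(k-\mu)$, which matches the denominator claimed in Lemma~\ref{HL1}. For the numerator, I expand the product $(k-\theta_2)(k-\theta_v)=k^2-k(\theta_2+\theta_v)+\theta_2\theta_v=k^2-k(\lambda-\mu)+(\mu-k)$, giving exactly the bracketed quantity $k^2-(\lambda-\mu)k+\mu-k$ in the first displayed form. Multiplying through by the factor $4$ and by $\frac{ab}{v}$ reproduces the middle expression of the lemma verbatim.

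The only remaining step is to verify the second equality, that the middle expression simplifies to $\frac{4ab\mu}{(\lambda-\mu)^2+4(k-\mu)}$. This amounts to showing that the factor $\frac{1}{v}\bigl[k^2-(\lambda-\mu)k+\mu-k\bigr]$ equals $\mu$. Here I would invoke the standard strongly regular graph identity $k(k-\lambda-1)=(v-k-1)\mu$, which rearranges to $k^2-k\lambda-k=v\mu-k\mu-\mu$, i.e. $k^2-(\lambda-\mu)k+\mu-k=v\mu$. Dividing by $v$ yields exactly $\mu$, completing the simplification. This parameter relation is a basic counting fact for strongly regular graphs (counting walks of length two from a fixed vertex), so it can be cited rather than reproved.

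There is no genuine obstacle in this argument; it is a routine algebraic reduction of an already-established bound. The one point demanding care is the final simplification, where one must remember to bring in the counting identity $k(k-\lambda-1)=(v-k-1)\mu$ rather than attempting to derive the cleaner form purely from the eigenvalue symmetric functions. I would present the computation as a short chain of substitutions, explicitly flagging the use of this identity, and then conclude that both displayed expressions agree, establishing the lemma.
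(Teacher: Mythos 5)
Your proposal is correct and follows exactly the paper's route: the paper likewise substitutes $\theta_2+\theta_v=\lambda-\mu$, $\theta_2\theta_v=\mu-k$ and the counting identity $v=1+k+k(k-\lambda-1)/\mu$ into Lemma~\ref{haemersbnd}, only stating the algebra more tersely than you do. Your verification that $k^2-(\lambda-\mu)k+\mu-k=v\mu$ is the same simplification the paper leaves implicit.
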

\begin{proof}
Because $G$ is a $(v,k,\lambda,\mu)$-SRG, we know that $\theta_2+\theta_v=\lambda-\mu$, $\theta_2\theta_v=\mu-k$ and $v=1+k+k(k-\lambda-1)/\mu$. The conclusion now follows from Lemma \ref{haemersbnd}.
\end{proof}

Lemma \ref{HL1} can be used to show that Brouwer's Conjecture is true when the following condition is satisfied:
\begin{prop}\label{kSRG}
Let $G$ be a connected $(v,k,\lambda,\mu)$-SRG. If
\begin{equation}\label{kSRGeq}
4(k-2\lambda)(k-\mu)>(\lambda-\mu)^2(2k-\lambda-3)
\end{equation}
then $G$ is OK.
\end{prop}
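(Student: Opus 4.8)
The plan is to argue by contradiction, combining the spectral bound of Lemma~\ref{HL1} with the elementary edge-neighborhood bound. Suppose $G$ is not OK. Then $G$ has a disconnecting set into non-singleton components, so by Lemma~\ref{smallv} we may assume $v\geq 2k-\lambda+3$. Choose such a disconnecting set $S$ of minimum size $s=\kappa_2(G)$, let $A$ be a component of $G\setminus S$ of smallest order with $a=|A|\geq 2$, and put $B=V\setminus(A\cup S)$, $b=|B|\geq a$, so that $a+b=v-s$ and $N(A)\subseteq S$. Since $G$ is not OK, $\kappa_2(G)\neq 2k-\lambda-2$, so the target is to reach the contradiction $s\geq 2k-\lambda-2$.

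The argument splits on the size of $A$. If $a=2$, then $A$ is a single edge $uv$; since $N(A)\subseteq S$ and $|N(\{u,v\})|=2k-\lambda-2$ for every edge of a $(v,k,\lambda,\mu)$-SRG, we get $s\geq 2k-\lambda-2$ at once. So assume $a\geq 3$. Writing $D=(\lambda-\mu)^2+4(k-\mu)$, Lemma~\ref{HL1} gives $s\geq 4ab\mu/D$, i.e. $ab\leq sD/(4\mu)$. On the other hand, for a fixed sum $a+b=v-s$ with $3\leq a\leq b$ the product $ab=a(v-s-a)$ is increasing on $[3,(v-s)/2]$, so it is minimized at $a=3$, giving $ab\geq 3(v-s-3)$. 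Combining the two bounds yields $12\mu(v-s-3)\leq sD$.

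Now assume for contradiction that $s\leq 2k-\lambda-3$. Since $12\mu(v-s-3)$ is decreasing in $s$ while $sD$ is increasing, substituting the extremal value $s=2k-\lambda-3$ produces $12\mu(v-2k+\lambda)\leq(2k-\lambda-3)D$. The decisive step is then a single algebraic identity. Using $v=1+k+k(k-\lambda-1)/\mu$ one checks that
\[
\mu(v-2k+\lambda)=(k-\lambda-1)(k-\mu),
\]
and a short manipulation gives
\[
12\mu(v-2k+\lambda)-(2k-\lambda-3)D=4(k-2\lambda)(k-\mu)-(2k-\lambda-3)(\lambda-\mu)^2 .
\]
Hence the inequality $12\mu(v-2k+\lambda)\leq(2k-\lambda-3)D$ is exactly the negation of \eqref{kSRGeq}. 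This contradicts the hypothesis, so $s\geq 2k-\lambda-2$, which combined with the easy case $a=2$ forces $\kappa_2(G)=2k-\lambda-2$ and shows $G$ is OK.

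The step needing the most care is the case $a\geq 3$: one must recognize that the spectral bound is weakest for the smallest admissible component (which is why the edge case $a=2$ is peeled off first, leaving $a=3$ as the governing extreme), and then carry out the identity $\mu(v-2k+\lambda)=(k-\lambda-1)(k-\mu)$ that makes the resulting inequality collapse precisely onto \eqref{kSRGeq}. This exact collapse is what explains the otherwise opaque form of the hypothesis. A minor remaining point is that the definition of OK also requires a disconnecting set of size exactly $2k-\lambda-2$ realizing non-singleton components; this is routinely furnished by the neighborhood $N(\{u,v\})$ of a suitable edge, using $v\geq 2k-\lambda+3$.
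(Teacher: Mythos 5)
Your proof is correct and follows essentially the same route as the paper's: assume $s\leq 2k-\lambda-3$, use the spectral bound of Lemma~\ref{HL1} together with $ab\geq 3(a+b-3)$, substitute $v=1+k+k(k-\lambda-1)/\mu$, and observe that the resulting inequality collapses exactly onto the negation of \eqref{kSRGeq}. Your explicit treatment of the $a=2$ case and the identity $\mu(v-2k+\lambda)=(k-\lambda-1)(k-\mu)$ merely make transparent steps the paper leaves implicit.
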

\begin{proof}
Let $s$ denote the minimum size of a disconnecting set $S$ whose removal leaves only non-singleton components. Assume that $s\leq 2k-\lambda-3$. This implies $a+b=v-s\geq v-(2k-\lambda-3)=v+3+\lambda-2k$. As $a,b\geq 3$, we obtain $ab\geq 3(v+\lambda-2k)$. This inequality and Lemma \ref{haemersbnd} imply
\begin{equation}\label{haemers1}
s\geq \frac{4ab\mu}{(\lambda-\mu)^2+4(k-\mu)}\geq \frac{12\mu(v+\lambda-2k)}{(\lambda-\mu)^2+4(k-\mu)}.
\end{equation}
Since $v=1+k+k(k-\lambda-1)/\mu$, this gives
\begin{align*}
s&\geq 
\frac{12k^2+(-12\mu-12\lambda-12)k+12\mu+12\lambda\mu}{(\lambda-\mu)^2+4(k-\mu)}>2k-\lambda-3.
\end{align*}
where the last inequality can be shown to be equivalent to our hypothesis \eqref{kSRGeq}. Thus, $s>2k-\lambda-3$ which is a contradiction. This finishes our proof.
\end{proof}

Proposition \ref{kSRG} shows that Brouwer's Conjecture is true for all known triangle-free strongly regular graphs: the Petersen graph $(10,3,0,1)$-SRG, the folded $5$-cube $(16,5,0,2)$-SRG, the Hoffman-Singleton graph $(50,7,0,1)$-SRG, the Gewirtz graph $(56,10,0,2)$-SRG, the $M_{22}$ graph $(77,16,0,4)$-SRG, and the Higman-Sims graph $(100,22,0,6)$-SRG (see \cite{BH1} for a detailed description of these graphs).

Proposition \ref{kSRG} implies that Brouwer's Conjecture is also true for infinite families of strongly regular graphs such as the complements of symplectic graphs over $\mathbb{F}_q$ (these graphs are $\left(\frac{q^{2r}-1}{q-1},\frac{q^{2r-1}-q}{q-1},\frac{q^{2r-2}-1}{q-1}-2,\frac{q^{2r-2}-1}{q-1}\right)$-SRGs and their complements are described in Section \ref{sec:symplectic}) and the generalized quadrangle $GQ(q,q)$ graphs which are $((q+1)(q^2+1),q(q+1),q-1,q+1)$-SRGs for $q$ a prime power (see \cite{BH1,GR} for a more detailed description of these graphs).

Proposition \ref{kSRG} also implies that Brouwer's Conjecture is true for many strongly regular graphs where $k\geq 2\lambda+1$ and $\lambda$ and $\mu$ are close to each other.

\begin{lem}\label{diff}
Let $G$ be a $(v,k,\lambda,\mu)$-SRG. If $\lambda-\mu\in \{-1,0,1\}$ and $k\geq 2\lambda+1$, then $\kappa_2(G)=2k-\lambda-2$.
\end{lem}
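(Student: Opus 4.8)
The plan is to invoke Proposition~\ref{kSRG} and verify that its hypothesis \eqref{kSRGeq} holds under the stated assumptions $\lambda-\mu\in\{-1,0,1\}$ and $k\geq 2\lambda+1$. Proposition~\ref{kSRG} guarantees that $G$ is OK, which means either $G$ has no disconnecting set with all non-singleton components, or $\kappa_2(G)=2k-\lambda-2$; to upgrade ``OK'' to the exact value $\kappa_2(G)=2k-\lambda-2$, I must separately exhibit an actual disconnecting set of size $2k-\lambda-2$ whose removal leaves non-singleton components, namely the neighborhood $N(\{u,v\})$ of an edge $uv$, whose complement must be checked to contain no isolated vertices.

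First I would substitute $\lambda-\mu=d$ with $d\in\{-1,0,1\}$ into \eqref{kSRGeq}. Since $(\lambda-\mu)^2=d^2\leq 1$, the right-hand side $(\lambda-\mu)^2(2k-\lambda-3)$ is at most $2k-\lambda-3$, and in the cases $d=\pm1$ it equals exactly $2k-\lambda-3$. Meanwhile the left-hand side is $4(k-2\lambda)(k-\mu)$. Using $k\geq 2\lambda+1$ we get $k-2\lambda\geq 1$, and since $\mu=k+\theta_2\theta_v<k$ (a connected SRG has $\mu\geq 1$, so $k-\mu\geq 1$ as well when $G$ is non-complete; more directly $k-\mu=-\theta_2\theta_v>0$), both factors on the left are at least $1$. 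The main calculation is therefore to show
\begin{equation*}
4(k-2\lambda)(k-\mu)>(2k-\lambda-3)
\end{equation*}
in the worst case $d^2=1$, and the trivial inequality $4(k-2\lambda)(k-\mu)>0$ suffices when $d=0$. I expect the left side to dominate comfortably: with $k-\mu\geq 1$ and $k-2\lambda\geq 1$ the left side is at least $4(k-2\lambda)$, and relating $k-2\lambda$ to $2k-\lambda-3$ via $k\geq 2\lambda+1$ should close the gap, treating any small-$k$ boundary cases by hand.

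The part requiring the most care is the boundary analysis where the two factors are smallest, i.e.\ $k=2\lambda+1$ and $\mu=k-1$ (equivalently $\theta_2\theta_v=-1$), since here \eqref{kSRGeq} could fail. I would check that such parameter sets either satisfy the strict inequality after all or correspond to degenerate graphs (such as complete multipartite or disconnected cases) that are excluded by the hypotheses or handled by Lemma~\ref{smallv}; in particular, when $v\leq 2k-\lambda+2$ the graph is automatically OK and we are reducing to the range $v\geq 2k-\lambda+3$ where Proposition~\ref{kSRG}'s argument via Lemma~\ref{HL1} applies. The main obstacle I anticipate is precisely this degenerate boundary: ruling out or separately dispatching the finitely many parameter tuples with $k-2\lambda=1$ and $k-\mu=1$ so that the strict inequality in \eqref{kSRGeq} is genuinely available, and then confirming that the candidate edge-neighborhood disconnecting set of size $2k-\lambda-2$ actually exists and leaves only non-singleton components, which establishes $\kappa_2(G)=2k-\lambda-2$ rather than merely the inequality.
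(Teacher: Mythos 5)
Your overall strategy --- verifying hypothesis \eqref{kSRGeq} of Proposition \ref{kSRG} in cases according to $\lambda-\mu\in\{-1,0,1\}$ --- is exactly the paper's, and your treatment of the case $\lambda=\mu$ is fine. But the main calculation in the cases $\lambda-\mu=\pm1$ has a genuine gap: you discard the wrong factor. You bound $4(k-2\lambda)(k-\mu)\geq 4(k-2\lambda)$ using $k-\mu\geq 1$ and then hope to show $4(k-2\lambda)>2k-\lambda-3$ from $k\geq 2\lambda+1$. That inequality is equivalent to $2k>7\lambda-3$, which does not follow from $2k\geq 4\lambda+2$ once $\lambda\geq 2$; indeed it fails for the conference graphs $(4t+1,2t,t-1,t)$ with $t\geq 4$ (for the Paley graph on $17$ vertices one has $4(k-2\lambda)=8$ but $2k-\lambda-3=10$), and these are precisely the graphs this lemma is meant to cover. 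The point is that $k-2\lambda$ can be as small as $1$ or $2$, whereas $k-\mu=k-\lambda\pm 1$ is the large factor (at least about $\lambda$ under your hypotheses). The correct move, which is what the paper does, is the opposite of yours: use $k-2\lambda\geq 1$ and keep $k-\mu$, obtaining $4(k-2\lambda)(k-\lambda\pm 1)\geq 4(k-\lambda\pm1)\geq 2k-\lambda-2$, where the last inequality reduces to $2k\geq 3\lambda-6$ (resp.\ $2k\geq 3\lambda+2$) and follows at once from $k\geq 2\lambda+1$. With that substitution your argument closes, and the separate ``boundary analysis'' of tuples with $k-\mu=1$ you anticipate is unnecessary, since $\mu=\lambda\pm1$ together with $k\geq 2\lambda+1$ already forces $k-\mu$ to be large.

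Your second concern --- that Proposition \ref{kSRG} only yields ``$G$ is OK,'' so that to assert the equality $\kappa_2(G)=2k-\lambda-2$ one should still exhibit a disconnecting set of that size leaving non-singleton components --- is legitimate, but note that the paper's own proof does not address it either: it passes directly from \eqref{kSRGeq} to the stated equality. This is an imprecision you share with the source rather than an additional defect of your proposal.
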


\begin{proof}
If $\lambda=\mu$, then by Proposition \ref{kSRG}, we have $\kappa_2(G)=2k-\lambda-2$ whenever $4k^2-(8\lambda+4\mu)k+8\lambda\mu\geq 1$. This is equivalent with $4(k-2\lambda)(k-\mu)\geq 1$ which is certainly true as $k\geq 2\lambda+1$ and $k>\mu$.

If $\lambda=\mu+1$, then by Proposition \ref{kSRG}, we have $\kappa_2(G)=2k-\lambda-2$ whenever $4(k-2\lambda)(k-\mu)\geq 1+2k-\lambda-3=2k-\lambda-2$. This is true when $k\geq 2\lambda+1$ as
$$
4(k-2\lambda)(k-\lambda+1)\geq 4(k-\lambda+1)\geq 2k-\lambda-2
$$
where the last inequality is equivalent to $2k\geq 3\lambda-6$ (true as $k\geq 2\lambda+1$).

If $\lambda=\mu-1$, then by Proposition \ref{kSRG}, we have $\kappa_2(G)=2k-\lambda-2$ whenever $4(k-2\lambda)(k-\mu)\geq 1+2k-\lambda-3=2k-\lambda-2$. This is true when $k\geq 2\lambda+1$ as
$$
4(k-2\lambda)(k-\lambda-1)\geq 4(k-\lambda-1)\geq 2k-\lambda-2
$$
where the last inequality is equivalent to $2k\geq 3\lambda +2$ (true as $k\geq 2\lambda+1$).
\end{proof}

The previous result implies that Brouwer's Conjecture is true for conference graphs (which are $(4t+1,2t,t-1,t)$-SRGs). This family includes the Paley graphs among others (see \cite{BH1,GR} for a description of these graphs).

\section{Copolar and $\Delta$-spaces}\label{copolarsec}

A pair $(P,L)$, where $L \subseteq 2^P$, is called a partial linear space if (i) every $\ell \in L$ contains at least two points in $P$, and (ii) for two distinct $p, q \in P$ there is at most one line $\ell \in L$ that contains both. We call the elements of $P$ points and the elements in $L$ lines. A point $p$ is on the line $\ell$ if $p \in \ell$. Also, two distinct points are collinear if there is a line that contains both points. A partial linear space $(P,L)$ is called a {\em copolar space} (following Hall \cite{Hall1}) or {\em proper delta space} (according to Higman; see Hall \cite{Hall1} and the references therein) if for any point $p$ and line $\ell$, $p\notin \ell$, $p$ is collinear with none or all but one of the points of $\ell$. A more general notion is the notion of a $\Delta$-space. A partial linear space $(P, L)$ is called a {\em $\Delta$-space}  if
for any point $p$ and line $\ell$, $p \not \in \ell$, $p$ is collinear with none, all but one or all the points of $\ell$. We say a partial linear space $(P, L)$ is of order $(s,t)$ if every line contains exactly $s+1$ points, and every point is in exactly $t+1$ lines.

Assume that the point graph $\Gamma$ of a $\Delta$-space of order $(s,t)$ (i.e. the graph with vertex point $P$ where two points are adjacent if they are collinear) is strongly regular with parameters $(v, k, \lambda, \mu)$ with $k=s(t+1)$. A line $\ell$ (which is a clique of order $s+1$ in $\Gamma$) has exactly $2k -\lambda - s-1$ neighbors (vertices adjacent to at least one vertex of the clique corresponding to $\ell$). Let $x$ be a vertex not collinear with any point on $\ell$. Then there are exactly $\mu(s+1)$ paths of length two with $x$ as one endpoint and a point of $\ell$ as its other endpoint. This means that $x$ has at most $\mu(s+1)/s$ neighbors at distance one from $\ell$. So, if $\mu(s+1)/s < k$ and $s\geq 2$, then $\Gamma$ is a counterexample for the conjecture of Brouwer. Thus, any $(v,k,\lambda,\mu)$-SRG that is the point graph of a $\Delta$-space of order $(s,t)$ and satisfies the conditions $\mu(s+1)/s<k$ and $s\geq 2$, is a counterexample to Brouwer's Conjecture. We briefly describe such graphs below and in more detail in the later sections.

J.I. Hall \cite{Hall1} determined all the strongly regular graphs that appear as the point graph of a copolar space and these graphs are: the triangular graphs $T(m)$ (see Section \ref{sec:triangular}), the symplectic graphs $Sp(2r,q)$ over the field $\mathbb{F}_q$ for any $q$ prime power (see Section \ref{sec:symplectic}), the strongly regular graphs constructed from the hyperbolic quadrics $O^{+}(2r,2)$ (see Section \ref{sec:hyperbolic}) and from the elliptic quadrics $O^{-}(2r,2)$ over the field $\mathbb{F}_2$ (see Section \ref{sec:elliptic}) respectively, and the complements of Moore graphs. Only the complement of a Moore graph does not satisfy $\mu(s+1)/s<k$, so all the other graphs give all counterexamples for Brouwer's Conjecture. In the next four sections, we will describe these counterexamples in more detail and we will compute the exact value of $\kappa_2$ for all of them.

Below we will give some examples of $\Delta$-spaces coming from the hyperbolic lines of a generalized quadrangle. A generalized quadrangle of order $(s,t)$, $GQ(s,t)$, is a partial linear space $(P, L)$ of order $(s,t)$ such that  for any point $p$ and line $\ell$, $p \not \in \ell$, $p$ is collinear with exactly one point on $\ell$. We will also call the corresponding point graph $\Gamma$ of a generalized quadrangle of order $(s,t)$, a $GQ(s,t)$. A pair of non-adjacent vertices $(x,y)$ is called {\em regular} if the hyperbolic line $\{x,y\}^{\perp \perp}$ has size $t+1$, where $x^{\perp} = \{x\}^{\perp}= \{x\} \cup \Gamma(x)$, and $A^{\perp} = \bigcap_{a \in A} a^{\perp}$, for $x$ a vertex and $A$ a set of vertices. Note that in this case the induced subgraph on $\{x,y\}^{\perp} \cup \{x,y\}^{\perp \perp}$ is a $K_{t+1, t+1}$. A vertex is called {\em regular} if for all $y$ non-adjacent to $x$ the pair $(x,y)$ is regular. If a generalized quadrangle of order $(s,t)$, $\Gamma$, has the property that every vertex is regular then $\Pi = (V(\Gamma), \{$hyperbolic lines$\})$ is a $\Delta$-space. This means that the complement of $\Gamma$ is a counterexample for Brouwer's Conjecture as it is just the point graph of $\Pi$ (as in the complement of $\Gamma$ the hyperbolic line is just a clique with $t+1$ vertices and the subgraph induced on the vertices not adjacent to this clique is just the induced subgraph on $\{x,y\}^{\perp}$ which is another hyperbolic line or clique with $t+1$ vertices; in this case, we do not need the hyperbolic lines to have size $t+1$, it is sufficient that they have size at least three, in order to give a counterexample.).  More generally, with the same proof as above, the complement of a generalized quadrangle of order $(s,t)$ with one regular pair $(x,y)$ of points is a counterexample to Brouwer's Conjecture. There are examples of generalized quadrangles with all points regular and generalized quadrangles with only one regular point, see \cite[p.26-28]{PayneThas}. So far as the authors know, the generalized quadrangles of order $(s,t)$ with all points regular are not classified.

\section{The triangular graphs $T(m)$}\label{sec:triangular}

The triangular graph $T(m)$ is the line graph of the complete graph $K_m$; its vertices are the $2$-subsets of $[m]:=\{1,\dots,m\}$ and $\{u,v\}\sim \{x,y\}$ if and only if $|\{u,v\}\cap \{x,y\}|=1$. It is easy to see that the triangular graph $T(m)$ is a $\left({m\choose 2},2(m-2),m-2,4\right)$-SRG and it is actually known that for any $m\neq 8$, any $\left({m\choose 2},2(m-2),m-2,4\right)$-SRG is isomorphic to $T(m)$ (see \cite{BH1,VW}). When $m=8$, there are $3$ pairwise non-isomorphic strongly regular graphs known as Chang graphs whose parameters are $(28,12,6,4)$ which are not isomorphic to $T(8)$ (see \cite{BH1,VW}). In  Section \ref{sec:primitive}, we prove that the Chang graphs are OK therefore showing that $\kappa_2(G)$ is not determined by the parameters of $G$.

For $m=4$, the graph $T(4)$ is a $(6,4,2,4)$-SRG and $2k-\lambda +2=8-2+2=8$. For $m=5$, the graph $T(5)$ is a $(10,6,3,4)$-SRG and $2k-\lambda+2=12-3+2=11$. Both these graphs satisfy the condition of Lemma \ref{smallv} and thus, both $T(4)$ and $T(5)$ are OK.

Assume $m\geq 6$ from now on. Brouwer's Conjecture states that $\kappa_{2}(T(m))=2k-\lambda-2=2(2(m-2))-(m-2)-2=3(m-2)-2=3m-8$.
As shown in Section \ref{copolarsec}, this is not true for triangular graphs. In the next proposition, we determine $\kappa_2(T(m))$ precisely as well as the structure of minimum disconnecting sets.

\begin{prop}\label{triangular}
For $m\geq 6$, $\kappa_2(T(m))=3m-9$ and the only disconnecting sets of this size are formed (modulo a permutation of $[m]$) by the set of vertices adjacent to at least one of the vertices $\{1,2\},\{1,3\}$ or $\{2,3\}$.
\end{prop}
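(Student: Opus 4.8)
The plan is to establish the upper bound $\kappa_2(T(m)) \leq 3m-9$ by exhibiting an explicit disconnecting set, and then to prove the matching lower bound together with the uniqueness claim by analyzing the possible structure of a minimum-size set $A$ inducing a small connected component. For the upper bound, I would take the three vertices $\{1,2\}, \{1,3\}, \{2,3\}$ of $T(m)$, which form a triangle (any two of them meet in one point), and let $C$ be this $3$-clique. The neighborhood $N(C)$ consists of all $2$-subsets of $[m]$ meeting $\{1,2,3\}$ in exactly one point but not equal to one of the three listed vertices; there are $3(m-3)$ such vertices (three choices of which of $1,2,3$ to keep, times $m-3$ choices for the other coordinate). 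One checks that $3(m-3) = 3m-9$, and that removing $N(C)$ leaves $C$ itself as one component (of size $3$) and the triangular graph $T(m-3)$ on the $2$-subsets of $\{4,\dots,m\}$ as the other, both non-singleton when $m \geq 6$. This shows $\kappa_2(T(m)) \leq 3m-9$, which already establishes $T(m)$ is a counterexample since $3m-9 < 3m-8 = 2k-\lambda-2$.

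For the lower bound, following the framework set up in Section~\ref{sec:general}, I would let $A$ be the vertex set of a smallest non-singleton component of $T(m)\setminus S$ for a minimum disconnecting set $S$, so $|A| = a \geq 3$ (since components are non-singleton and $T(m)$ is connected, in fact any two vertices of $A$ sharing no point would be non-adjacent, forcing $a\ge 3$ to induce something connected), and I aim to show $|N(A)| \geq 3m-9$. The natural combinatorial handle is the \emph{support} $U := \bigcup_{\{i,j\}\in A}\{i,j\} \subseteq [m]$ of the $2$-subsets in $A$. A vertex $\{x,y\}\notin A\cup N(A)$ must be non-adjacent to every element of $A$, i.e. disjoint from or equal to each $2$-set in $A$; since $B\neq\emptyset$ this constrains $U$. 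I would bound $|N(A)|$ from below in terms of $|U|$ by counting the $2$-subsets meeting $U$ in exactly one point, and then optimize over the feasible configurations of $A$. The case $|U|=3$ (so $A$ is a sub-triangle of some $T(3)$) should give exactly $3m-9$ and pin down the extremal examples, while larger supports should force strictly more neighbors.

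The main obstacle I expect is the lower-bound optimization: I must show that no connected $A$ produces fewer than $3m-9$ outside neighbors, and I would handle this by a careful case analysis on $|U|$ and on how the $2$-sets of $A$ overlap inside $U$ (distinguishing, e.g., a "star" configuration of $2$-sets through a common point from a "triangle" configuration), using the constraint $a\le b$ from Section~\ref{sec:general} to rule out the degenerate situations where $A$ or $B$ is too large. For the uniqueness statement I would argue that equality $|N(A)| = 3m-9$ forces $|U|=3$ and $A$ to contain all three $2$-subsets of that $3$-set (a single edge or path on three points in $U$ would either fail to be a disconnecting configuration or leave a singleton component), so that $A$, and hence $S=N(A)$, is exactly the configuration described, modulo relabeling $[m]$. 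As a technical check throughout I would verify that the residual graph $B$ is genuinely disconnected from $A$ and non-singleton, which is where the hypothesis $m\geq 6$ is used (for small $m$ the complement $T(m-|U|)$ could degenerate), consistent with the separate treatment of $T(4),T(5)$ via Lemma~\ref{smallv}.
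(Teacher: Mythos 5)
Your upper bound is identical to the paper's (the neighborhood of the triangle $\{1,2\},\{1,3\},\{2,3\}$, of size $3(m-3)=3m-9$, leaving $T(m-3)$ as the other component), but your lower bound takes a genuinely different and arguably cleaner route. The paper splits into two cases: when $A$ is a clique it computes $|N(A)|$ explicitly for the two clique types (the triangle type, giving $3m-9$, and the ``star'' type $\{1,2\},\dots,\{1,a+1\}$, giving $\binom{m}{2}-a-\binom{m-a-1}{2}\geq 4m-13$), and when $A$ is not a clique it finds induced $K_{1,2}$'s in both $A$ and $B$ and exhibits $4m-16$ internally vertex-disjoint paths between them, invoking Menger. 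Your support argument replaces all of this with one uniform count: every vertex of $B$ must be disjoint from every vertex of $A$, so with $U$ the support and $u=|U|$ one gets $3\leq u\leq m-3$ (connectivity of $A$ gives $u\geq 3$; a non-singleton component of $B$ gives $m-u\geq 3$), and every $2$-set meeting $U$ in exactly one point lies in $N(A)$, whence $|N(A)|\geq u(m-u)\geq 3(m-3)=3m-9$. This buys a shorter proof and a more transparent equality analysis; what the paper's path construction buys is the stronger byproduct recorded after the proposition (the classification of disconnecting sets of size $3m-8$, including the exceptional one in $T(8)$), which your count alone does not immediately yield.

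One point needs care when you write this out: your assertion that ``larger supports should force strictly more neighbors'' is not literally true from the count $u(m-u)$ alone, since $u(m-u)=3m-9$ also at $u=m-3$. You must dispose of that endpoint separately: there $B\subseteq\binom{[m]\setminus U}{2}$ has at most $3$ vertices, so $|A|\leq|B|$ forces $|A|=3$ with support $m-3\geq 4$, and then $N(A)$ additionally contains $2$-subsets of $U$ itself (e.g.\ the third vertex completing a triangle on any edge of $A$), giving strict inequality. You do flag the $a\leq b$ constraint for exactly this purpose, so the gap is one of execution, not of idea. For the uniqueness claim at $u=3$, note that a connected $A$ with $|A|\geq 3$ and support of size $3$ must be the whole triangle, and equality $|S|=3m-9=|N(A)|$ forces $S=N(A)$, which is the configuration in the statement.
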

\begin{proof}
By the remarks in Section \ref{copolarsec} involving Hall's characterization of copolar spaces, we know that $T(m)$ is a counterexample to Brouwer's Conjecture and that a good candidate for $\kappa_2(T(m))$ is the size of the neighborhood of a clique of $T(m)$ corresponding to a hyperbolic line. Such a clique has the form of $C=\{\{1,2\}, \{1,3\},\{2,3\}\}$. By the remarks in Section \ref{copolarsec} or by direct observation, $|N(C)|=2k-\lambda-3=3m-9$ and $\kappa_2(T(m))\leq 3m-9$ (as $T(m)\setminus (C\cup N(C))$ is isomorphic to $T(m-3)$).

The remaining part of the proof is devoted to showing that any disconnecting set of $T(m)$ whose removal creates only non-singleton components, must have at least $3m-9$ vertices.

Recall our strategy from Section \ref{sec:general}. If $S$ is a disconnecting set of $G$ of minimum size such that the components of $G\setminus S$ are not singletons, then let $A$ denote the vertex set of one of the components of $G\setminus S$ of minimum size. We have $N(A)\subset S$ and thus, $|N(A)|\leq |S|$ and by our choice of $A$, we have that $|B|\geq |A|$, where $B:=V(G)\setminus (A\cup S)$.

We will show that $|S|\geq 3m-9$ with equality if and only if $S$ is (modulo a permutation of $[m]$) $N(A)$ where $A=\{\{1,2\}, \{2,3\}, \{1,3\}\}$. Our proof uses a case analysis depending on $A$ inducing a clique or not.

If $A$ induces a clique, then without any loss of generality, we may assume we are in one of the following two situations:
\begin{enumerate}
\item $A=\{\{1,2\},\{1,3\},\{2,3\}\}$

In this case, $|N(A)|=3m-9$ which finishes the proof.

\item $A=\{\{1,2\},\dots,\{1,a+1\}\}$ and  $3\leq a\leq m-3$.

The size of the neighborhood of $A$ is
\begin{equation*}
|N(A)|={m\choose 2}-a-{m-a-1\choose 2}=:f(a).
\end{equation*}
Since $f(a)$ attains its minimum for $a=3$, it follows that $|S|\geq |N(A)|\geq f(3)=4m-13>3m-8$ as $m\geq 6$.
\end{enumerate}

If $A$ does not induce a clique, then we may also assume that any component of $B$ is not a clique (as otherwise we can repeat the argument from the previous case). Thus, each of $A$ and $B$ must contain an induced $K_{1,2}$ (that is, a vertex adjacent to two non-adjacent vertices). Without loss of generality, we may assume that $A$ contains $\{\{1,2\},\{1,3\},\{2,4\}\}$ and $B$ contains $\{\{5,6\},\{5,7\},\{6,8\}\}$. This shows that for $m\in \{6,7\}$, $A$ or $B$ will be a clique and we are in the previous case.

Assume $m\geq 8$. The following sets of paths with one endpoint in $\{\{1,2\},\{1,3\},\{2,4\}\}$ and the other endpoint in $\{\{5,6\},\{5,7\},\{6,8\}\}$
have no interior vertices in common:
\begin{align*}
P_1&=\{\{1,2\},\{1,x\},\{5,x\},\{5,6\}: 9\leq x\leq m\}\\
P_2&=\{\{1,2\},\{2,x\},\{6,x\},\{5,6\}: 9\leq x\leq m\}\\
P_3&=\{\{1,3\},\{3,x\},\{7,x\},\{5,7\}: 9\leq x\leq m\}\\
P_4&=\{\{2,4\},\{4,x\},\{8,x\},\{6,8\}: 9\leq x\leq m\}\\
P_5&=\{\{u,v\},\{v,w\},\{w,z\}: \{u,v\}=\{1,3\} \text{ or } \{2,4\}, \{v,w\}\in \{1,2,3,4\}\times \{5,6,7,8\}, \\ 
&\qquad \{v,w\}=\{5,7\} \text{ or } \{6,8\} \}.
\end{align*}
For $1\leq i\leq 4$, each of $P_i$ contains $m-8$ paths. The set $P_5$ contains $16$ paths determined by the middle vertex. Hence, we have $4(m-8)+16=4m-16$ (interior) vertex-disjoint paths from $\{\{1,2\},\{1,3\},\{2,4\}\}$ to $\{\{5,6\},\{5,7\},\{6,8\}\}$. In order to disconnect $\{\{1,2\},\{1,3\},\{2,4\}\}$ from $\{\{5,6\},\{5,7\},\{6,8\}\}$, one must remove at least one vertex from each path in $\cup_{i=1}^{5}P_i$. This implies $|S|\geq |\cup_{i=1}^{5}P_i|=4m-16\geq 3m-8$ and finishes our proof.
\end{proof}

We remark that our proof also shows that the only disconnecting sets of size $3m-8$ in $T(m)$ whose removal leaves only non-singleton components, are of the form $N(\{u,v\})$ where $u$ and $v$ are adjacent vertices of $T(m)$ for $m\geq 6$ and (modulo a permutation of $[8]$) $\{1,2,3,4\}\times \{5,6,7,8\}$ for $T(8)$.

\section{The symplectic graphs $Sp(2r,q)$ over $\mathbb{F}_q$}\label{sec:symplectic}

Let $q$ be a prime power and $r\geq 2$ be an integer. If $x$ is a non-zero (column) vector in $\mathbb{F}_q^{2r}$, denote by $[x]$ the $1$-dimensional vector subspace of $\mathbb{F}_q^{2r}$ that is spanned by $x$ and denote by $x^t$ the row vector that is the transpose of $x$. Let $M$ be the $2r\times 2r$ block diagonal matrix whose diagonal blocks are
\begin{equation}\label{M}
\begin{bmatrix}
0  & -1\\
1 & 0
\end{bmatrix}.
\end{equation}

\noindent For example, when $r=2$, the matrix $M$ is $\begin{bmatrix}0 & -1 & 0 & 0\\1 & 0 & 0 & 0\\0 & 0 & 0 & -1\\0 & 0 & 1 & 0\end{bmatrix}$.

The symplectic graph $Sp(2r,q)$ over $\mathbb{F}_q$ is the complement of the orthogonality graph of the unique non-degenerate symplectic form over $\mathbb{F}_q^{2r}$. More precisely, its vertex set is formed by the $1$-dimensional subspaces $[x]$ of $\mathbb{F}_q^{2r}$ with $[x]\sim [y]$ if and only if $x^tMy\neq 0$. This graph is called {\em symplectic} as the function $f(x,y)=x^{t}My$ is known as a symplectic form (see \cite{GR,GR2} for more details).
Note also that some authors (such as Godsil and Royle \cite{GR2} and Shult \cite{Sh}) use the name symplectic graph for the orthogonality graph of the unique non-degenerate symplectic form (which is the complement of $Sp(2r,q)$) while others (such as Rotman and Weichsel \cite{RoWe} and Tang and Wan \cite{TaWa}) use the same notation as ours.

The symplectic graph $Sp(2r,q)$ is a $\left(\frac{q^{2r}-1}{q-1},q^{2r-1},q^{2r-2}(q-1),q^{2r-2}(q-1)\right)$-SRG. We give a short proof of this fact. It is obvious that $v=\frac{q^{2r}-1}{q-1}$. For $x\in \mathbb{F}_q^{2r}$, let $[x]^{\perp}=\{y\in \mathbb{F}_q^{2r}: x^t My=0\}$. As $\dim([x]^{\perp})=2r-1$, we have that $|[x]^{\perp}|=q^{2r-1}$ for any non-zero $x\in \mathbb{F}_q^{2r}$.
To determine $k$, consider a non-zero $x\in \mathbb{F}_q^{2r}$ and note that the number of vertices adjacent to $[x]$ in $Sp(2r,q)$ equals
$(q^{2r}-|[x]^{\perp}|)/(q-1)=q^{2r-1}$. To determine $\lambda$ and $\mu$, let $[x]\neq [y]$ be two distinct vertices of $Sp(2r,q)$. By inclusion-exclusion, $|[x]^{\perp}\cup [y]^{\perp}|=|[x]^{\perp}|+|[y]^{\perp}|-|[x]^{\perp}\cap [y]^{\perp}|=2q^{2r-1}-q^{2r-2}$ and consequently, the number of common neighbors of $[x]$ and $[y]$ equals $(q^{2r}-|[x]^{\perp}\cup [y]^{\perp}|)/(q-1)=q^{2r-2}(q-1)$.

Note that for the graph $Sp(2r,q)$, the size of the neighborhood of an edge is $2k-\lambda-2=q^{2r-1}+q^{2r-2}-2$. In Proposition \ref{sympl}, we prove that $\kappa_2(Sp(2r,q))=q^{2r-1}+q^{2r-2}-q-1$ and characterize all disconnecting sets of this size. This will show the existence of strongly regular graphs $G$ for which the difference between the conjectured value of $\kappa_2(G)$ and its actual value is arbitrarily large. In the proof of Proposition \ref{sympl} which is the main result of this section, we will use the following simple and general combinatorial result. 
\begin{lem}\label{lowerbounddiscset}
If $G$ is a $(v,k,\lambda,\mu)$-SRG and $A\subset V(G)$ induces a connected subgraph such that $|A|=t\geq 3$, then $|N(A)|\geq 2k-\lambda-t$.

If equality occurs and $A$ contains two non-adjacent vertices, then $k\leq \lambda+\mu$. If equality occurs and $A$ induces a clique in $G$ and one of the components of $G\setminus (S\cup A)$ is a singleton, then $\mu/k\geq 1-1/t\geq 2/3$.
\end{lem}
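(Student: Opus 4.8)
The plan is to treat all three assertions with a single device: grow $A$ one vertex at a time along an ordering that keeps each initial segment connected. Fix an ordering $a_1,\dots,a_t$ of $A$ such that $a_1\sim a_2$ and, for each $i\ge 3$, the vertex $a_i$ has a neighbor among $a_1,\dots,a_{i-1}$; such an ordering exists because $A$ is connected. Writing $A_i=\{a_1,\dots,a_i\}$ and $\nu_i=|N(a_i)\setminus(A_i\cup N(A_{i-1}))|\ge 0$ for the number of genuinely new outside-neighbors contributed by $a_i$, I would first check the bookkeeping identity
\begin{equation*}
|N(A_i)|=|N(A_{i-1})|-1+\nu_i,
\end{equation*}
valid because $a_i\in N(A_{i-1})$ leaves the neighborhood while every other vertex of $N(A_{i-1})$ stays. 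Since $|N(A_2)|=2k-\lambda-2$ for the edge $a_1a_2$, summing gives $|N(A)|=2k-\lambda-t+\sum_{i\ge 3}\nu_i$, which proves the inequality $|N(A)|\ge 2k-\lambda-t$ and shows that equality holds precisely when $\nu_i=0$ for every $i\ge 3$; this is the form of the equality hypothesis I will exploit in both refinements.

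For the second assertion, suppose equality holds and $A$ has two non-adjacent vertices. Since $A$ is connected, it contains an induced path on three vertices, say $v,u,w$ with $u\sim v$, $u\sim w$ and $v\not\sim w$. I would run the growth process with $a_1=v$, $a_2=u$, $a_3=w$ (extendable to a valid ordering of all of $A$ because $A$ is connected). Equality forces $\nu_3=0$, i.e. every neighbor of $w$ outside $A_3$ already lies in $N(\{u,v\})=(N(u)\cup N(v))\setminus\{u,v\}$; together with $u\in N(v)$ this yields $N(w)\subseteq N(u)\cup N(v)$. Counting via inclusion--exclusion with $|N(w)\cap N(u)|=\lambda$ (as $w\sim u$) and $|N(w)\cap N(v)|=\mu$ (as $w\not\sim v$) then gives $k=|N(w)|\le\lambda+\mu$, as desired.

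For the third assertion I would first extract the structural meaning of equality when $A$ is a clique. Because every ordering of a clique is admissible, equality forces $\nu_i=0$ for all orderings; placing any prescribed pair $a,a'$ first shows that no vertex of $N(A)$ can be non-adjacent to both $a$ and $a'$. Hence every $w\in N(A)$ is adjacent to at least $t-1$ vertices of $A$, i.e. $d_w^A:=|N(w)\cap A|\ge t-1$. Now let $z$ be the singleton component; since $z$ is separated from $A$ and from the rest of $B$, all $k$ of its neighbors lie in $N(A)$ (here the relevant disconnecting set is exactly $S=N(A)$, which realizes the equality). Double-counting edges between $N(z)$ and $A$, and using that $z$ is non-adjacent to each $a\in A$ so that $|N(z)\cap N(a)|=\mu$, gives
\begin{equation*}
t\mu=\sum_{a\in A}|N(z)\cap N(a)|=\sum_{w\in N(z)}d_w^A\ge (t-1)k,
\end{equation*}
whence $\mu/k\ge 1-1/t\ge 2/3$ since $t\ge 3$.

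The routine parts are the edge-neighborhood count and the two inclusion--exclusion/double-counting steps. The main obstacle is the third assertion: one must recognize that for a clique the equality condition is ordering-independent and translate it into the pointwise bound $d_w^A\ge t-1$, and one must pin down that the singleton's entire neighborhood sits inside $N(A)$ (i.e. that the relevant disconnecting set is precisely $N(A)$) so that the double count runs over all $k$ neighbors of $z$. Verifying the admissibility and extendability of the chosen orderings is a small point I would confirm carefully in each case.
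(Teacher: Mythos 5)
Your proof is correct and is essentially the paper's argument recast in incremental form: the identity $|N(A)|=2k-\lambda-t+\sum_{i\ge 3}\nu_i$ is the paper's observation that $N(A)\supseteq N(\{x,y\})\setminus A$ for an edge $xy$ inside $A$, your equality analysis (every vertex of $N(A)$ must lie in $N(\{a,a'\})$ for the relevant pairs, hence is non-adjacent to at most one vertex of a clique $A$) matches the paper's, and the final double count $t\mu\ge (t-1)k$ over the singleton's neighborhood is identical, including the shared implicit assumption that $S=N(A)$ there. The only cosmetic difference is in the second assertion, where you deduce $N(w)\subseteq N(u)\cup N(v)$ and conclude $k\le\lambda+\mu$ by subadditivity, whereas the paper counts the non-neighbors of the middle vertex against $\mu-1$; the two computations are equivalent.
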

\begin{proof}
As $A$ induces a connected subgraph of $G$, consider an edge whose endpoints $x$ and $y$ are in $A$. Then $|N(A)|\geq |N(\{x,y\})|-|A\setminus \{x,y\}|=2k-\lambda-2-(t-2)=2k-\lambda-t$.

When equality happens $|N(A)|=2k-\lambda-t$ and $A$ induces a connected subgraph of $G$ that is not a clique, consider three vertices $x, y, z$ of $A$ such that $y$ is adjacent to both $x$ and $z$ while $x$ and $z$ are not adjacent. As $A\setminus \{x,y\}\subset N(\{x,y\})$ and $A\setminus \{y,z\}\subset N(\{y,z\})$, we deduce that any vertex of $(N(A)\cup A)\setminus \{y\}$ that is not adjacent to $y$, must belong to $(N(x)\cap N(z))\setminus \{y\}$. As the number of non-neighbors of $y$ in $(A\cup N(A))\setminus \{y\}$ is $k-\lambda-1$ and $|N(x)\cap N(z)|=\mu$, this implies $k-\lambda\leq \mu$ which was our goal.

When equality happens $|N(A)|=2k-\lambda-t$ and $A$ induces a clique of size $t$, it follows that  each vertex of $N(A)$ is adjacent to at least $t-1$ vertices of $A$. Otherwise, there exists a vertex $z\in S$ and two distinct vertices $x,y\in A$ such that $z$ is not adjacent to neither $x$ nor $y$. Because $|N(A)|=2k-\lambda-t$, we deduce that $N(\{x,y\})$ is the disjoint union of $A\setminus \{x,y\}$ and $N(A)$. This implies that $N(A)$ must be a subset of $N(\{x,y\})$. Thus, $z\in N(A)\subset N(\{x,y\})$ which is a contradiction.

Now if some component of $G\setminus (A\cup N(A))$ is a singleton $\{w\}$, then $w$ is not adjacent to any vertex of $A$ and its neighborhood must be contained in $N(A)$. Thus, any vertex of $A$ has exactly $\mu$ common neighbors with $w$ and all these common neighbors must be contained in $N(A)$. As $|A|=t$ and every vertex of $N(A)$ is adjacent with at least $t-1$ vertices in $A$, it follows by a simple counting argument that $t\mu\geq (t-1)k$ which finishes our proof.
\end{proof}

The following is the main result of this section.
\begin{prop}\label{sympl}
If $q$ is a prime power and $r\geq 2$, then $\kappa_2(Sp(2r,q))=q^{2r-1}+q^{2r-2}-q-1$ and the only disconnecting sets of this size are the neighborhoods of hyperbolic lines (which are sets of the form $N(\{[u],[v],[u+x_1v],\dots,[u+x_{q-1}v]\})$, where $[u]$ and $[v]$ are adjacent vertices and $\mathbb{F}_q\setminus \{0\}=\{x_1,\dots,x_{q-1}\}$).
\end{prop}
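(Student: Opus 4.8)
The plan is to establish both the value and the extremal characterization through the symplectic geometry of the point set, using the description of the vertices as the projective points $[x]$ of $\mathbb{F}_q^{2r}$ with $[x]\sim[y]$ iff $x^tMy\neq 0$. For the upper bound I would exhibit the separator directly. Take a non-degenerate $2$-dimensional subspace $W$ (a hyperbolic plane); its $q+1$ projective points form a clique $\ell$ (a hyperbolic line), and a point $[z]$ is non-adjacent to all of $\ell$ precisely when $z\in W^{\perp}$. Since $W^{\perp}$ is a non-degenerate $(2r-2)$-dimensional symplectic space, the points it carries induce a connected copy of $Sp(2r-2,q)$ on $\frac{q^{2r-2}-1}{q-1}\geq q+1$ vertices, and there are no edges between $\ell$ and $W^{\perp}$. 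Hence $N(\ell)$ disconnects $Sp(2r,q)$ into the two non-singleton pieces $\ell$ and $Sp(2r-2,q)$, and a direct count (or the copolar formula $2k-\lambda-s-1$ with $s=q$ from Section~\ref{copolarsec}) gives $|N(\ell)|=q^{2r-1}+q^{2r-2}-q-1$, so $\kappa_2(Sp(2r,q))\leq q^{2r-1}+q^{2r-2}-q-1$.

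For the lower bound, following the strategy of Section~\ref{sec:general}, let $S$ be a minimum separator leaving only non-singleton components, let $A$ be the vertex set of a smallest component, and $B=V\setminus(A\cup S)$, so that $N(A)\subseteq S$, $B\subseteq V\setminus(A\cup N(A))$, and $|B|\geq\max(|A|,2)$. The engine of the proof is a single geometric identity for the neighborhood size in terms of the span $U=\langle A\rangle$ of (representatives of) the points of $A$, with $d=\dim U$. A point $[z]$ is a non-neighbor of $A$ exactly when $z\in A^{\perp}=U^{\perp}$, and the points of $A$ lying in $U^{\perp}$ are precisely those in $\mathrm{rad}(U)=U\cap U^{\perp}$; writing $\rho=|A\cap\mathrm{rad}(U)|$, this yields
\begin{equation*}
|N(A)|=v-|A|-\frac{q^{2r-d}-1}{q-1}+\rho\geq v-\frac{q^{d}-1}{q-1}-\frac{q^{2r-d}-1}{q-1}=:g(d),
\end{equation*}
where the inequality uses $\rho\geq 0$ together with $|A|\leq\frac{q^{d}-1}{q-1}$ (the number of points in $U$).

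It then remains to optimize $g$ over the possible dimensions. Since $A$ is connected with at least two vertices we have $d\geq 2$. The two top dimensions are excluded cheaply: if $d=2r$ then $U^{\perp}=0$ and $A$ has no non-neighbors, while if $d=2r-1$ then $U^{\perp}$ is $1$-dimensional and $A$ has at most one non-neighbor; either way $|B|\leq 1$, contradicting that $B$ contains a non-singleton component. For $2\leq d\leq 2r-2$ the function $g(d)=\frac{q^{2r}+1-q^{d}-q^{2r-d}}{q-1}$ is concave in $d$ (because $q^{d}+q^{2r-d}$ is convex with minimum at $d=r$), so it attains its minimum on this range at the endpoints, where $g(2)=g(2r-2)=(q+1)(q^{2r-2}-1)=q^{2r-1}+q^{2r-2}-q-1$. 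Therefore $|S|\geq|N(A)|\geq g(d)\geq q^{2r-1}+q^{2r-2}-q-1$, matching the upper bound. For the small components $|A|\leq q+1$ one may alternatively invoke Lemma~\ref{lowerbounddiscset}, which gives $|N(A)|\geq 2k-\lambda-|A|\geq q^{2r-1}+q^{2r-2}-q-1$.

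Finally, the extremal characterization comes from forcing equality in every step above. Equality requires $|S|=|N(A)|$ (so $S=N(A)$), $g(d)=g(2)$ (so $d\in\{2,2r-2\}$), and $|N(A)|=g(d)$, the last of which forces $\rho=0$ and $|A|=\frac{q^{d}-1}{q-1}$; together these say that $A$ consists of all points of $U$ and that $\mathrm{rad}(U)=0$, i.e. $U$ is non-degenerate. For $d=2$ this makes $A$ a hyperbolic line and $S=N(A)$ its neighborhood, as desired. For $d=2r-2$ the complementary component $B$ lies on the hyperbolic plane $U^{\perp}$ and has only $q+1$ vertices, so minimality of $A$ forces $\frac{q^{2r-2}-1}{q-1}\leq q+1$, i.e. $r=2$, in which case the two components are interchangeable hyperbolic lines and we are again in the $d=2$ situation. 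I expect the main obstacle to be precisely this endgame: setting up the geometric identity is clean and the optimization is routine once concavity is observed, but converting ``equality in $g$'' into the exact statement that $A$ is the full point set of a non-degenerate plane, and disposing of the symmetric $d=2r-2$ branch through the minimality of $A$, requires the careful bookkeeping of $\rho$ and $\dim U$ carried out above.
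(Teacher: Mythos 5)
Your proof is correct, but the lower bound is obtained by a genuinely different route from the paper's. The paper splits into cases according to $|A|$: for $q+2\leq |A|\leq v/2$ it applies the eigenvalue bound of Lemma \ref{HL1} (treating $r=2$ and $r\geq 3$ separately), for $3\leq |A|\leq q$ and for $|A|=q+1$ a clique it invokes the combinatorial Lemma \ref{lowerbounddiscset}, and for $|A|=q+1$ not a clique it counts $|N(\{x,y\})|=2k-\mu$ for a non-adjacent pair; the equality analysis is then confined to the clique case of Lemma \ref{lowerbounddiscset}. You instead exploit the linear algebra of the symplectic form: the identity $|N(A)|=v-|A|-\frac{q^{2r-d}-1}{q-1}+\rho$ with $U=\langle A\rangle$, $d=\dim U$ and $\rho=|A\cap\mathrm{rad}(U)|$ is exactly right (the non-neighbours of $A$ are the points of $U^{\perp}$, of which $\rho$ lie in $A$), the exclusion of $d\in\{2r-1,2r\}$ via $|B|\leq 1$ is sound, and the convexity of $q^{d}+q^{2r-d}$ correctly pins the minimum of $g$ at $d\in\{2,2r-2\}$ with $g(2)=(q+1)(q^{2r-2}-1)$. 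Your equality analysis ($\rho=0$, $A$ the full point set of a non-degenerate $U$, and the $d=2r-2$ branch collapsing to $r=2$ by minimality of $A$ --- or, alternatively, by observing that in that branch $S=N(B)$ with $B$ itself a hyperbolic line) recovers the characterization. What each approach buys: the paper's argument uses only SRG parameters plus generic lemmas and so runs in parallel for all the copolar-space counterexamples, whereas yours is specific to $Sp(2r,q)$ but avoids the case split on $|A|$ entirely, gives a single clean bound $|N(A)|\geq g(\dim\langle A\rangle)$ that does not even use connectivity of $A$, and makes the extremal configurations fall out of the equality conditions rather than requiring a separate structural lemma.
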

\begin{proof}
By the remarks in Section \ref{copolarsec} involving Hall's characterization of copolar spaces, we know that $Sp(2r,q)$ is a counterexample to Brouwer's Conjecture and that a good candidate for $\kappa_2(Sp(2r,q))$ is the size of the neighborhood of a clique of $Sp(2r,q)$ corresponding to a hyperbolic line. Such a clique has the form of $C=\{[u],[v],[u+x_1v],\dots,[u+x_{q-1}v]\}$ where $[u]$ and $[v]$ are adjacent vertices. By the remarks in Section \ref{copolarsec}, it follows that $|N(C)|=2k-\lambda-(q+1)=q^{2r-1}+q^{2r-2}-q-1$.

The fact that $Sp(2r,q)\setminus (C\cup N(C))$ has no singleton components follows from Section \ref{copolarsec}, but can be also proved using the last part of Lemma \ref{lowerbounddiscset}. Note that $C$ induces a clique of size $q+1$ in $Sp(2r,q)$ and $N(C)=(q+1)(q^{2r-2}-1)=2k-\lambda-(q+1)$ so the last part of Lemma \ref{lowerbounddiscset} can be applied here. As $\mu/k=1-1/q<1-1/(q+1)$, it follows from Lemma \ref{lowerbounddiscset}, that $G\setminus (C\cup N(C))$ has no singleton components.
This shows $\kappa_2(Sp(2r,q))\leq (q+1)(q^{2r-2}-1)$.

In the second part of the proof, we will show that $\kappa_2(Sp(2r,q))\geq q^{2r-1}+q^{2r-2}-q-1$. Moreover, we will prove that $|S|\geq q^{2r-1}+q^{2r-2}-q$ unless $S$ is the neighborhood of a hyperbolic line. 

We first prove that for any subset of vertices $A$ with $q+2\leq |A|\leq \frac{v}{2}$, we have $|S|\geq q^{2r-1}+q^{2r-2}-q$. Our proof is by contradiction and uses the eigenvalue methods from Lemma \ref{HL1}. Without any loss of generality, we assume that $|A|\leq |B|$.

If $r=2$, then assume that $|S|\leq q^{3}+q^{2}-q-1$ which implies $|A|+|B|=v-|S|\geq 2q+2$. As $|A|\geq q+2$, it follows that $|B|\leq q$ which contradicts the fact that $|A|\leq |B|$.

If $r\geq 3$, then assume that $|S|\leq q^{2r-1}+q^{2r-2}-q-1$ which implies $|A|+|B|=v-|S|\geq \frac{q^{2r-2}-1}{q-1}+q+1$. As $q+2\leq |A|\leq \frac{v}{2}$, it follows that $|A| |B|\geq \frac{(q+2)(q^{2r-2}-q)}{q-1}$. Lemma \ref{HL1} implies
$$
|S|\geq (q-1)|A||B|\geq (q+2)(q^{2r-2}-q)=q^{2r-1}+2q^{2r-2}-q^2-2q.
$$
However $q^{2r-1}+2q^{2r-2}-q^2-2q>q^{2r-1}+q^{2r-2}-q-1$ as this is equivalent to $q^{2r-2}>q^2+q-1$ which is true for $q\geq 2$ and $r\geq 3$. Thus, $|S|\geq q^{2r-1}+q^{2r-2}-q$ when $q+2\leq |A| \leq \frac{v}{2}$.

If $A$ induces a connected subgraph of $Sp(2r,q)$ and $3\leq |A|\leq q$, then by Lemma \ref{lowerbounddiscset}, we have that $|S|\geq 2k-\lambda-|A|\geq 2k-\lambda-q=q^{2r-1}+q^{2r-2}-q$.

The only remaining case is when $|A|=q+1$ and $A$ induces a connected subgraph of $Sp(2r,q)$. If $|A|=q+1$ and $A$ does not induce a clique, then we prove the stronger inequality $|S|\geq q^{2r-1}+q^{2r-2}-q+1$. To see this, let $x$ and $y$ be two non-adjacent vertices of $A$. Note that $|N(\{x,y\})|=2k-\mu$. Then $|S|\geq |N(A)|\geq |N(\{x,y\})|-|A\setminus \{x,y\}|=(2k-\mu)-(|A|-2)=q^{2r-1}+q^{2r-2}-q+1$ as claimed.

Thus, the only case remaining is when $|A|=q+1$ and $A$ induces a clique. By Lemma \ref{lowerbounddiscset}, $|S|\geq |N(A)|\geq 2k-\lambda-|A|=q^{2r-1}+q^{2r-2}-q-1$. Equality happens if and only the clique induced by $A$ is a hyperbolic line and $S=N(A)$. This finishes our proof.
\end{proof}

\section{The hyperbolic quadric graphs $O^{+}(2r,2)$}\label{sec:hyperbolic}

The hyperbolic quadric graph $O^{+}(2r,2)$ is the subgraph of $Sp(2r,2)$ induced by
$V^{+}:=\{(x_1,\dots,x_{2r})^t\in \mathbb{F}_2^{2r}: x_1x_2+x_3x_4+\dots +x_{2r-1}x_{2r}=1\}$ (the complement of a hyperbolic quadric in $\mathbb{F}_2^{2r}$). The vertex $x:=(x_1,\dots,x_{2r})^t$ is adjacent to $y:=(y_1,\dots,y_{2r})^t$ if $x^tMy=1$, where $M$ is defined in \eqref{M}. It is known (see \cite{GR2}) that $O^{+}(2r,2)$ is a
 $(2^{2r-1}-2^{r-1},2^{2r-2}-2^{r-1},2^{2r-3}-2^{r-2},2^{2r-3}-2^{r-1})$-SRG. The value of $2k-\lambda-2$ equals $3(2^{2r-3}-2^{r-2})-2$.

The following proposition is the main result of this section.
\begin{prop}
For $r\geq 3$, $\kappa_2(O^{+}(2r,2))=3(2^{2r-3}-2^{r-2})-3=2k-\lambda-3$. The only disconnecting sets of this size are the neighborhoods of hyperbolic lines (which are sets of the form $N(\{x,y,x+y\})$ where $x$ and $y$ are adjacent).
\end{prop}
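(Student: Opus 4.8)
The proof will follow the now-familiar template established in Propositions~\ref{triangular} and \ref{sympl}. The graph $G = O^{+}(2r,2)$ arises as the point graph of a copolar space, so by the general discussion in Section~\ref{copolarsec} the neighborhood of a hyperbolic line is a disconnecting set leaving no singleton components, and a hyperbolic line here is exactly a triangle $C = \{x, y, x+y\}$ with $x, y$ adjacent. The first step, the upper bound, is routine: $C$ is a clique of size $s+1 = 3$, so $|N(C)| = 2k - \lambda - 3 = 3(2^{2r-3}-2^{r-2}) - 3$, and I would verify that $G \setminus (C \cup N(C))$ has no singleton components either by citing the $\mu(s+1)/s < k$ criterion from Section~\ref{copolarsec} or by applying the last clause of Lemma~\ref{lowerbounddiscset}, checking that $\mu/k = (2^{2r-3}-2^{r-1})/(2^{2r-2}-2^{r-1}) < 2/3$ fails the singleton condition. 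This gives $\kappa_2(G) \le 2k - \lambda - 3$.

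\textbf{The lower bound via case analysis.} The substance is showing $|S| \ge 2k - \lambda - 3$ for every disconnecting set $S$ leaving non-singleton components, where I take $A$ to be the smallest component and $B$ its complement, so $|A| \le |B|$ and $N(A) \subseteq S$. I would split on $|A|$ exactly as in Proposition~\ref{sympl}. For the range $3 \le |A| \le 2$ (i.e.\ $|A| = 3$, forced to be a clique when $A$ is not to reduce to the non-clique subcase), Lemma~\ref{lowerbounddiscset} gives $|N(A)| \ge 2k - \lambda - |A| = 2k - \lambda - 3$ directly. When $|A| = 3$ but $A$ does not induce a clique, Lemma~\ref{lowerbounddiscset} yields the strictly stronger bound $|S| \ge |N(\{x,y\})| - 1 = (2k-\mu) - 1$, and since here $\mu < \lambda + 3$ this exceeds $2k - \lambda - 3$. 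For large $|A|$, I would invoke the spectral bound of Lemma~\ref{HL1}: assuming $|S| \le 2k - \lambda - 3$ forces $|A| + |B| = v - |S|$ to be large, hence $ab = |A||B|$ large, and substituting the parameters of $O^{+}(2r,2)$ into $|S| \ge 4ab\mu/((\lambda-\mu)^2 + 4(k-\mu))$ should produce a contradiction for all $|A|$ above a small threshold and $r \ge 3$.

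\textbf{The main obstacle.} The delicate point is the gap between the threshold where the eigenvalue bound of Lemma~\ref{HL1} becomes effective and the small cases $|A| = 3$ covered by Lemma~\ref{lowerbounddiscset}. Because $s = 2$ is so small here, the intermediate values of $|A|$ (say $4 \le |A| \le$ some modest bound) must be handled by hand, either by sharpening the combinatorial counting of $N(A)$ using the quadric structure or by checking that the spectral inequality already covers them. I expect the eigenvalue parameters to cooperate: the relevant quantity $(\lambda - \mu)^2 + 4(k-\mu) = (\theta_2 - \theta_v)^2$ equals $2^{2r-2}$ here (since $\theta_2 = 2^{r-1} - 1$ and $\theta_v = -2^{r-1} - 1$), which is comfortably smaller than $4ab\mu$ once $ab$ is even moderately large, so the window requiring manual treatment should be narrow.

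\textbf{Characterizing equality.} Finally, to pin down that the only minimum disconnecting sets are the $N(\{x,y,x+y\})$, I would trace the equality conditions through the case analysis: equality in Lemma~\ref{lowerbounddiscset} with $|A| = 3$ and $A$ a clique forces every vertex of $N(A)$ to be adjacent to at least two of the three vertices of $A$, and together with the requirement that $A$ be a hyperbolic line (the only triangles whose neighborhood is a clean cut), this isolates precisely the claimed family. All other cases were shown to give strict inequality $|S| > 2k - \lambda - 3$, so they contribute no minimum sets.
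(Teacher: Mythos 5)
Your plan follows the paper's proof almost step for step: same upper bound via the hyperbolic-line clique and the last clause of Lemma \ref{lowerbounddiscset}, same split of the lower bound into the cases $|A|=3$ (clique vs.\ $K_{1,2}$) and $|A|\geq 4$ via Lemma \ref{HL1}. The one place you genuinely diverge is the $K_{1,2}$ case: you use the two non-adjacent vertices to get $|N(A)|\geq 2k-\mu-1$, which beats $2k-\lambda-3$ because $\mu<\lambda$ here, whereas the paper runs inclusion--exclusion on all three vertices and uses the algebraic fact that $x+z$ and $y+z$ are common neighbours of the induced path $\{x,z,y\}$. Your route is simpler and perfectly valid (amusingly, the two bounds coincide numerically for $O^{+}(2r,2)$ since $k=2\lambda$); the paper's version is the one that generalizes to graphs where $\mu\geq\lambda$. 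Two points to fix or note. First, your eigenvalue aside is wrong: for $O^{+}(2r,2)$ one has $\lambda-\mu=2^{r-2}$ and $k-\mu=2^{2r-3}$, so $(\theta_2-\theta_v)^2=(\lambda-\mu)^2+4(k-\mu)=9\cdot 2^{2r-4}$, with $\theta_2=2^{r-1}$ and $\theta_v=-2^{r-2}$, not the values you state; this does not derail the argument, but the actual constant in Lemma \ref{HL1} is the paper's $(2^{r-2}-1)/(2^{r-2}+2^{r-5})$. Second, your worry about an intermediate window $4\leq|A|\leq$ ``some modest bound'' is unfounded: assuming $|S|\leq 2k-\lambda-3$ forces $a+b\geq 2^{2r-3}+2^{r-2}+3$, so $ab\geq 4(2^{2r-3}+2^{r-2}-1)$ already, and the spectral bound then exceeds $2k-\lambda-3$ for all $r\geq 3$ (at $r=3$ it gives $16>15$), so no manual case-checking is needed --- but a complete write-up would have to carry out this computation rather than defer it. Finally, be aware that your equality analysis (that only hyperbolic-line triangles, and not arbitrary triangles, achieve $|N(A)|=2k-\lambda-3$) is asserted rather than proved; the paper is equally terse on this point, so you are not worse off, but neither text actually rules out a non-hyperbolic triangle with no common neighbour outside itself.
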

\begin{proof}
By the remarks in Section \ref{copolarsec} involving Hall's characterization of copolar spaces, we know that $O^{+}(2r,2)$ is a counterexample to Brouwer's Conjecture and that a good candidate for $\kappa_2(O^{+}(2r,2))$ is the size of the neighborhood of a clique of $O^{+}(2r,2)$ corresponding to a hyperbolic line. Such a clique has the form of $C=\{x,y,x+y\}$ where $x$ and $y$ are adjacent in $O^{+}(2r,2)$. By the remarks in Section \ref{copolarsec}, it follows that $|N(C)|=2k-\lambda-3=3(2^{2r-3}-2^{r-2})-3$.

The fact that $O^{+}(2r,2)\setminus (C\cup N(C))$ has no singleton components follows from Section \ref{copolarsec}, but can be also proved using the last part of Lemma \ref{lowerbounddiscset}. Note that $C$ induces a clique of size $3$ in $O^{+}(2r,2)$ and $N(C)=2k-\lambda-3$ so the last part of Lemma \ref{lowerbounddiscset} can be applied here. 
As $\mu/k=(2^{2r-3}-2^{r-1})/(2^{2r-2}-2^{r-1})<\tfrac{2}{3}$, it follows by Lemma \ref{lowerbounddiscset} that $O^{+}(2r,2)\setminus N(\{x,y,x+y\})$ does not contain any singleton components.
This shows $\kappa_2(O^{+}(2r,2))\leq 3(2^{2r-3}-2^{r-2})-3$.

To show that $\kappa_2(O^{+}(2r,2))\geq 2k-\lambda-3$ and that all disconnecting sets of minimum size are neighborhoods of hyperbolic lines, we will prove that if $A$ induces a $K_{1,2}$ or a connected subgraph of order at least $4$, then $|S|>2k-\lambda-3$.

If $A$ induces a $K_{1,2}$, then assume $A=\{x,y,z\}$ such that $z$ is adjacent to both $x$ and $y$ and $x$ and $y$ are not adjacent. It follows easily that $x+z$ and $y+z$ are adjacent to $x,y$ and $z$. Thus, $|N(x)\cap N(y)\cap N(z)|\geq 2$. By inclusion and exclusion, we obtain
\begin{align*}
|S|&\geq |N(A)|\geq 2(k-1)+k-2-2\lambda-(\mu-1)+2\\
&=3\cdot 2^{2r-3}-2\cdot 2^{r-2}-1>3\cdot 2^{2r-3}-3\cdot 2^{r-2}-3\\
&=2k-\lambda-3.
\end{align*}


If $|A|\geq 4$, then Lemma \ref{HL1} implies
\begin{equation}\label{abh}
|S|\geq \frac{4ab\mu}{(\lambda-\mu)^2+4(k-\mu)}=\frac{ab(2^{r-2}-1)}{2^{r-2}+2^{r-5}}.
\end{equation}

Assume that $|S|\leq 2k-\lambda-3=3\cdot 2^{2r-3}-3\cdot 2^{r-2}-3$. It follows that $a+b=v-|S|\geq 2^{2r-3}+2^{r-2}+3$. As $a\geq 4$, we deduce that $ab\geq 4(2^{2r-3}+2^{r-2}-1)$. Using \eqref{abh}, we obtain $|S|\geq \frac{4(2^{2r-3}+2^{r-2}-1)(2^{r-2}-1)}{2^{r-2}+2^{r-5}}>3\cdot 2^{2r-3}-3\cdot 2^{r-2}-3$,
where the last inequality follows from $r\geq 3$ and some straightforward calculations. This contradiction finishes our proof.\end{proof}


\section{The elliptic quadric graphs $O^{-}(2r,2)$}\label{sec:elliptic}

The elliptic quadric graph $O^{-}(2r,2)$ is the subgraph of $Sp(2r,2)$ induced by $V^{-}:=\{(x_1,\dots,x_{2r})^t\in \mathbb{F}_2^{2r}:x_1^2+x_2^2+x_1x_2+x_3x_4+\dots +x_{2r-1}x_{2r}=1\}$ (the complement of an elliptic quadric in $\mathbb{F}_2^{2r}$). The vertex $x:=(x_1,\dots,x_{2r})^t$ is adjacent to $y:=(y_1,\dots,y_{2r})^t$  if $x^tMy=1$, where $M$ is defined in \eqref{M}. It is known (see \cite{GR2}) that $O^{-}(2r,2)$ is a $(2^{2r-1}+2^{r-1},2^{2r-2}+2^{r-1},2^{2r-3}+2^{r-2},2^{2r-3}+2^{r-1})$-SRG. The value of $2k-\lambda-2$ is $3\cdot(2^{2r-3}+2^{r-2})-2$.

The following proposition is the main result of this section.
\begin{prop}
For $r\geq 3$, $\kappa_2(O^{-}(2r,2))=3(2^{2r-3}+2^{r-2})-3=2k-\lambda-3$.
\end{prop}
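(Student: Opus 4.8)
The plan is to mirror the structure of the analogous proposition for $O^{+}(2r,2)$, since the elliptic quadric graph has the same flavor of parameters with $-2^{r-1}$ replaced by $+2^{r-1}$. First I would establish the upper bound $\kappa_2(O^{-}(2r,2))\leq 2k-\lambda-3$. By Hall's characterization of copolar spaces (Section~\ref{copolarsec}), $O^{-}(2r,2)$ arises from a copolar space whose lines are hyperbolic lines of the form $C=\{x,y,x+y\}$ with $x$ and $y$ adjacent; such a $C$ is a clique of size $3$, so by the remarks in Section~\ref{copolarsec}, $|N(C)|=2k-\lambda-3=3(2^{2r-3}+2^{r-2})-3$. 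To confirm this $N(C)$ is a genuine disconnecting set with no singleton components, I would apply the last part of Lemma~\ref{lowerbounddiscset} with $t=3$: it suffices to check that $\mu/k<2/3$. Here $\mu/k=(2^{2r-3}+2^{r-1})/(2^{2r-2}+2^{r-1})$, and a short calculation gives $\mu/k<2/3$ (equivalently $2k>3\mu$, i.e. $2^{2r-2}+2^{r-1}>2^{2r-3}+2^{r-1}$, which holds), so no component is a singleton and the upper bound follows.

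For the lower bound $\kappa_2(O^{-}(2r,2))\geq 2k-\lambda-3$, I would again follow the $O^{+}$ argument and split into cases on the structure of the minimal component $A$. If $A$ induces a clique, Lemma~\ref{lowerbounddiscset} gives $|S|\geq |N(A)|\geq 2k-\lambda-|A|$, so the only dangerous clique case is $|A|=3$, which yields exactly $2k-\lambda-3$ and forces $A$ to be a hyperbolic line. If $A$ induces a $K_{1,2}$, I would exploit the fact that over $\mathbb{F}_2$, given $x,y,z$ with $z\sim x$, $z\sim y$ but $x\not\sim y$, the vectors $x+z$ and $y+z$ are common neighbors lying on the quadric, forcing $|N(x)\cap N(y)\cap N(z)|$ to exceed the generic count; an inclusion–exclusion estimate then pushes $|N(A)|$ strictly above $2k-\lambda-3$. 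The remaining case, $|A|\geq 4$, is handled by the spectral bound of Lemma~\ref{HL1}: assuming $|S|\leq 2k-\lambda-3$ gives $a+b=v-|S|\geq 2^{2r-3}+2^{r-2}+3$, hence $ab\geq 4(a+b-4)$, and substituting the eigenvalue ratio $\tfrac{4\mu}{(\lambda-\mu)^2+4(k-\mu)}$ for $O^{-}(2r,2)$ should yield a bound strictly exceeding $2k-\lambda-3$ for $r\geq 3$.

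The step I expect to be the main obstacle is the $K_{1,2}$ case together with the final spectral inequality, because unlike the $O^{+}$ computation the signs in the parameters are reversed. I would first compute the exact eigenvalue quantity: since $\lambda-\mu=(2^{2r-3}+2^{r-2})-(2^{2r-3}+2^{r-1})=-2^{r-2}$ and $k-\mu=2^{2r-2}-2^{r-2}$, the ratio $\tfrac{4\mu}{(\lambda-\mu)^2+4(k-\mu)}$ simplifies, and I would need to verify carefully that the resulting fraction, multiplied by $ab\geq 4(2^{2r-3}+2^{r-2}-1)$, strictly exceeds $2k-\lambda-3$. For the $K_{1,2}$ estimate, the delicate point is counting common neighbors correctly over $\mathbb{F}_2$ and verifying that the extra common neighbors $x+z,y+z$ actually lie in $V^{-}$ and are adjacent to all of $x,y,z$; this requires a direct check using the quadratic form defining $V^{-}$ and the bilinear form $x^tMy$. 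Because the proposition as stated does not claim a characterization of all minimum disconnecting sets (unlike the $O^{+}$ and symplectic cases), I would not need to pin down uniqueness, which simplifies the clique case slightly; nonetheless I would confirm along the way that equality in the clique case forces $A$ to be a hyperbolic line, completing the determination of $\kappa_2(O^{-}(2r,2))$.
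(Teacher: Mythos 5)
Your overall strategy coincides with the paper's: the upper bound comes from the neighborhood of a hyperbolic line $\{x,y,x+y\}$ together with the last part of Lemma~\ref{lowerbounddiscset}, and the lower bound is argued by cases on $A$ (a $K_{1,2}$, a clique, or $|A|\geq 4$ via Lemma~\ref{HL1}). However, two of your steps would fail as described. First, the $K_{1,2}$ trick does not transfer from $O^{+}$ to $O^{-}$: with $z\sim x$, $z\sim y$, $x\not\sim y$, the points $x+z$ and $y+z$ are indeed two common neighbors of $x,y,z$ lying in $V^{-}$, but inclusion--exclusion then yields only $|N(A)|\geq 3k-2\lambda-\mu-1=(2k-\lambda-3)+(k-\lambda-\mu+2)$, and for $O^{-}(2r,2)$ one has $k-\lambda-\mu=-2^{r-2}$, so this estimate equals $2k-\lambda-3$ at $r=3$ and is \emph{strictly weaker} than $2k-\lambda-3$ for every $r\geq 4$. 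It certainly does not push $|N(A)|$ strictly above the target. The paper simply invokes the first part of Lemma~\ref{lowerbounddiscset} with $t=3$ to get $|N(A)|\geq 2k-\lambda-3$, which suffices because, unlike the $O^{+}$ case, no characterization of the minimum disconnecting sets is claimed for $O^{-}(2r,2)$.

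Second, in the case $|A|\geq 4$ there is a sign error and a missing case. From $|S|\leq 2k-\lambda-3$ one gets $a+b\geq v-(2k-\lambda-3)=2^{2r-3}-2^{r-2}+3$ (not $2^{2r-3}+2^{r-2}+3$), hence $ab\geq 4(2^{2r-3}-2^{r-2}-1)$, and Lemma~\ref{HL1} gives $|S|\geq ab(2^{r-2}+1)/(2^{r-2}+2^{r-5})$. The resulting inequality does exceed $2k-\lambda-3$ for $r\geq 4$, but at $r=3$ it produces only $80/3<27=2k-\lambda-3$, so no contradiction is reached and your claim that the bound ``strictly exceeds $2k-\lambda-3$ for $r\geq 3$'' fails at $r=3$. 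The paper treats $r=3$ (the $(36,20,10,12)$ parameters) separately: assuming $|S|\leq 26$ forces $a+b\geq 10$ and $ab\geq 24$, whence $|S|\geq 4ab/3\geq 32$, a contradiction; this still yields $|S|\geq 27=2k-\lambda-3$, which is all the proposition requires. You would need this extra step, or some replacement for it, to cover $r=3$.
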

\begin{proof}
By the remarks in Section \ref{copolarsec} involving Hall's characterization of copolar spaces, we know that $O^{-}(2r,2)$ is a counterexample to Brouwer's Conjecture and that a good candidate for $\kappa_2(O^{-}(2r,2))$ is the size of the neighborhood of a clique of $O^{-}(2r,2)$ corresponding to a hyperbolic line. Such a clique has the form of $C=\{x,y,x+y\}$ where $x$ and $y$ are adjacent. By the remarks in Section \ref{copolarsec}, it follows that $|N(C)|=2k-\lambda-3=3(2^{2r-3}+2^{r-2})-3$.

The fact that $O^{-}(2r,2)\setminus (C\cup N(C))$ has no singleton components follows from Section \ref{copolarsec}, but can be also proved using the last part of Lemma \ref{lowerbounddiscset}. Note that $C$ induces a clique of size $3$ in $O^{-}(2r,2)$ and $N(C)=2k-\lambda-3$ so the last part of Lemma \ref{lowerbounddiscset} can be applied here. 
As $\mu/k=(2^{2r-3}+2^{r-1})/(2^{2r-2}+2^{r-1})<\tfrac{2}{3}$, it follows by Lemma \ref{lowerbounddiscset} that $O^{-}(2r,2)\setminus N(\{x,y,x+y\})$ does not contain any singleton components. This proves $\kappa_2(O^{-}(2r,2))\leq 3(3^{2r-3}+2^{r-2})-3=2k-\lambda-3$.

To show that $\kappa_2(O^{-}(2r,2))\geq 2k-\lambda-3$, we will prove that if $A$ induces a $K_{1,2}$ or a connected subgraph of order at least $4$, then $|S|\geq 2k-\lambda-3$.

If $A$ induces a $K_{1,2}$, then Lemma \ref{lowerbounddiscset} implies that $|S|\geq 2k-\lambda-3$.

If $|A|\geq 4$, then we will show that $|S|\geq 2k-\lambda-2$ when $r\geq 4$ and $|S|\geq 2k-\lambda-3$ when $r=3$.

When $r\geq 4$, Lemma \ref{HL1} implies
\begin{equation}\label{abh2}
|S|\geq \frac{4ab\mu}{(\lambda-\mu)^2+4(k-\mu)}=\frac{ab(2^{r-2}+1)}{2^{r-2}+2^{r-5}}.
\end{equation}

Assume that $|S|\leq 2k-\lambda-3=3(2^{2r-3}+2^{r-2})-3$. It follows that $a+b=v-s\geq 2^{2r-3}-2^{r-2}+3$. As $a\geq 4$, we deduce that $ab\geq 4(2^{2r-3}-2^{r-2}-1)$. Using \eqref{abh2}, we obtain $|S|\geq \frac{4(2^{2r-3}-2^{r-2}-1)(2^{r-2}+1)}{2^{r-2}+2^{r-5}}>3(2^{2r-3}+2^{r-2}-1)$, where the last inequality follows from $r\geq 4$ and some straightforward calculations. This gives us a contradiction and finishes the proof of this case. 

When $r=3$, then $v=36,k=20,\lambda=10,\mu=12$ and we will show that $|S|\geq 2k-\lambda-3=27$. Assume $|S|\leq 26$ which implies $a+b=v-|S|\geq 10$. As $a\geq 4$, this means $ab\geq 24$. Using Lemma \ref{HL1}, we obtain $|S|\geq \frac{4ab}{3}\geq 32$ which is a contradiction with $|S|\leq 26$. This finishes our proof.
\end{proof}

We remark that there are exactly $32548$ non-isomorphic $(36,20,10,12)$-SRGs, as shown by McKay and Spence \cite{McKS}. We leave as an open problem the characterization of disconnecting sets of size $2k-\lambda-3$ in $O^{-}(2r,2)$ whose removal disconnects the graph into non-singleton components.

\section{The lattice graphs $L_2(n)=K_n\times K_n$ are OK}\label{sec:lattice}

The lattice graph $L_2(n)$ (also called Hamming graph or $OA(2,n)$ strongly regular graph; see \cite{BCN,BH1}) is the line graph of the complete bipartite graph $K_{n,n}$;
its vertex set is the cartesian product $[n]\times [n]=\{ab:1\leq a,b\leq n\}$ and $ab\sim xy$ if and only if $a=x$ or $b=y$. It is easy to show that $L_2(n)$ is a $(n^2,2(n-1),n-2,2)$-SRG and it is actually known that for $n\neq 4$, any $(n^2,2(n-1),n-2,2)$-SRG is isomorphic to $L_2(n)$ (see \cite{BH1,VW}). When $n=4$, there exists a $(16,6,2,2)$-SRG called the Shrikhande graph that is not isomorphic to $L_2(4)$ (see \cite{BH1} or Section 9 where we prove this graph is OK).

\begin{lem}\label{lattice}
For $n\geq 3$, $\kappa_2(L_2(n))=2k-\lambda-2=3n-4$. The only disconnecting sets of size $3n-4$ are $N(\{u,v\})$ where $u,v\in V(L_2(n))$ are adjacent and (modulo a permutation of the first and second coordinates) $\{13,14,23,24,32,31,42,41\}$ for $n=4$.
\end{lem}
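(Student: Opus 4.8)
The plan is to work in the natural coordinates of the rook's graph: I identify $V(L_2(n))$ with the cells of an $n\times n$ grid, two cells being adjacent iff they share a row or a column. For the upper bound I would exhibit the candidates directly. If $\{u,v\}$ is an edge (two cells in a common row, say), then $|N(\{u,v\})|=2k-\lambda-2=3n-4$, and removing $N(\{u,v\})$ leaves $\{u,v\}$ as a component of size two while the rest is the full $(n-1)\times(n-2)$ subgrid on the untouched rows and columns, which is connected and has at least two vertices for $n\ge 3$. Hence $\kappa_2(L_2(n))\le 3n-4$. For $n=4$ I would also record that the $2\times2$ block on rows and columns $\{3,4\}$ has a neighborhood of size $8=3n-4$ whose removal splits the grid into two $2\times2$ blocks, producing the second extremal set $\{13,14,23,24,31,32,41,42\}$.

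For the lower bound, let $S$ be a minimum disconnecting set into non-singleton components, $A$ a smallest component (so $a:=|A|\le b:=|B|$ with $B=V\setminus(A\cup S)$), and recall $N(A)\subseteq S$. The key bookkeeping step is to let $\rho$ and $\gamma$ be the numbers of rows and columns met by $A$. Since $A\cup N(A)$ is exactly the set of cells lying in one of those $\rho$ rows or $\gamma$ columns, inclusion--exclusion gives
\[
|N(A)|=n(\rho+\gamma)-\rho\gamma-a .
\]
Because $A$ lies in the $\rho\times\gamma$ subgrid and $B$ lies in the complementary $(n-\rho)\times(n-\gamma)$ subgrid, I have $a\le\rho\gamma$ and $a\le b\le(n-\rho)(n-\gamma)$, so $a\le\min\bigl(\rho\gamma,(n-\rho)(n-\gamma)\bigr)$. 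I then split on whether $A$ is a clique. Every clique of $L_2(n)$ lies in a single row or column, so a clique $A$ has $\{\rho,\gamma\}=\{1,a\}$ and the formula gives $|N(A)|=n+a(n-2)$, minimized at $a=2$ with value $3n-4$; thus in the clique case $|S|\ge|N(A)|\ge 3n-4$, with equality exactly when $A$ is an edge and $S=N(A)$.

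When $A$ is not a clique it contains an induced $K_{1,2}$ formed from a row-neighbor and a column-neighbor of some cell, forcing $\rho,\gamma\ge 2$. If $\rho+\gamma\le n$, then $a\le\rho\gamma$ gives $|N(A)|\ge n(\rho+\gamma)-2\rho\gamma$; writing this as $n\rho+\gamma(n-2\rho)$ and taking $\rho\le\gamma$ (hence $\rho\le n/2$), it is non-decreasing in $\gamma$, so its minimum is on $\gamma=\rho$, where $2\rho(n-\rho)$ is increasing for $\rho\le n/2$, giving minimum $4n-8$ attained only at $(\rho,\gamma)=(2,2)$. If instead $\rho+\gamma>n$, then $a\le(n-\rho)(n-\gamma)$ and, substituting $\rho'=n-\rho,\ \gamma'=n-\gamma$, the formula yields $|N(A)|\ge n^2-2\rho'\gamma'$ with $\rho'+\gamma'\le n-1$; since $\rho'\gamma'\le (n-1)^2/4<(n^2-3n+4)/2$, this is strictly greater than $3n-4$. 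Therefore every non-clique $A$ satisfies $|N(A)|\ge 4n-8$, which is $\ge 3n-4$ for $n\ge 4$ and equals $3n-4$ only when $n=4$ and $A$ is the full $2\times2$ block.

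Assembling the cases gives $\kappa_2(L_2(n))=3n-4$, and the equality analysis pins down the extremal sets: $|S|=3n-4=|N(A)|$ with $N(A)\subseteq S$ forces $S=N(A)$, where $A$ is either an edge (giving $N(\{u,v\})$) or, only for $n=4$, a full $2\times2$ block (giving the exceptional set). The case $n=3$ is degenerate and can be dispatched by Lemma~\ref{smallv}, or simply by noting that the non-clique subcases need $(n-\rho)(n-\gamma)\ge 2$, which fails for $n=3$, so only edge-neighborhoods survive. I expect the main obstacle to be the non-clique regime: one must use the non-singleton requirement on $B$ (equivalently $a\le b\le(n-\rho)(n-\gamma)$) to rule out "spread-out" sets $A$ with large $\rho,\gamma$ but small $|N(A)|$, and then carefully track the two-variable optimum and its unique equality configuration to isolate the $n=4$ exception. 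Note that the eigenvalue bound of Lemma~\ref{HL1} is too weak here, since its expansion constant reduces to $8ab/n^2$ which stays bounded when $a$ is small, so the argument must be genuinely combinatorial.
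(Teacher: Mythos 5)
Your proof is correct, and in the hard (non-clique) case it takes a genuinely different route from the paper's. The clique case is the same in both: $|N(A)|=n+a(n-2)\ge 3n-4$ with equality exactly for an edge. For the non-clique case the paper argues via Menger: it assumes (WLOG) that both $A$ and $B$ contain an independent pair and exhibits $4n-8$ internally vertex-disjoint paths joining them, which forces $|S|\ge 4n-8>3n-4$ for $n\ge 5$; the cases $n=3$ and $n=4$ are then dispatched separately, $n=4$ by an exhaustive enumeration of the possible small components ($K_4$, $C_4$, $P_4$, $K_3$ plus pendant, etc.). You instead observe that $A\cup N(A)$ is exactly the union of the $\rho$ rows and $\gamma$ columns met by $A$, giving the identity $|N(A)|=n(\rho+\gamma)-\rho\gamma-a$ together with the constraints $a\le\rho\gamma$ and $a\le b\le(n-\rho)(n-\gamma)$, and then run a short two-variable optimization (I checked the two regimes $\rho+\gamma\le n$ and $\rho+\gamma>n$; both are handled correctly, and the degenerate $n=3$ situation is indeed killed by $(n-\rho)(n-\gamma)\le 1$). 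What your approach buys: it treats all $n\ge 4$ uniformly, locates the unique non-clique equality configuration $(\rho,\gamma,a)=(2,2,4)$ at $n=4$ without any case enumeration, and makes the uniqueness statement fall out of the equality analysis; your closing remark that Lemma~\ref{HL1} degenerates to $8ab/n^2$ and is therefore useless here is also accurate. What the paper's approach buys is a template (explicit internally disjoint path systems) that it reuses almost verbatim for $T(m)$ and for the $OA(3,n)$ graphs, whereas your row/column counting is specific to the grid structure of $L_2(n)$.
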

\begin{proof}
When $n=3$, we have $3n-4=5$. Because $9=5+2+2$, the only disconnecting subsets of order $5$ are of the form $N(\{u,v\})$ where $u,v$ are adjacent.
Since the degree of $L_2(3)$ is $4$, it follows from \cite{BM} that $\kappa_2(L_2(3))\geq 5$. This proves the result for $n=3$.

When $n=4$, we have $3n-4=8$. Removing the subset of vertices
\begin{equation}\label{indL2_4}
\{13,14,23,24,32,31,42,41\}
\end{equation}
will disconnect $L_2(4)$ into two components whose vertex sets are $\{11, 12,$ $21, 22\}$ and $\{33, 34,$ $ 43, 44\}$ respectively. By deleting a disconnecting set of $8$ vertices of $L_2(4)$, we obtain a disconnected graph on $8$ vertices that will contain one component of at most $4$ vertices. If this component has $1$ or $2$ vertices, then we are done. If this component has $3$ vertices, then it is either $K_3$ or $K_{1,2}$. In the first case, we deduce that the disconnecting set has $1+3\cdot 3=10$ vertices which is a contradiction. In the other case, the disconnecting set has at least $9$ vertices which is again a contradiction. If this component has $4$ vertices, then there exists exactly one other component also of $4$ vertices. A connected subgraph of $L_2(4)$ with $4$ vertices is $K_4, C_4, P_4$ or $K_3$ with a pendant edge. By a case analysis, the only way this can happen is if the disconnecting set is (modulo some coordinate permutation) as in \eqref{indL2_4}.

For the rest of the proof, we assume $n\geq 5$. If $A$ induces a clique of size $a\geq 2$, then without loss of generality we may assume that $A=\{ 11, \dots, 1a\}$. We obtain $|N(A)|=n-a+a(n-1)\geq 3n-4$ with equality if and only if $a=2$. Thus, $|S|\geq |N(A)|\geq 3n-4$ with equality if and only if $S$ is the neighborhood of an edge.

If $A$ is not a clique, then we may also assume that any component of $B$ is not a clique (as otherwise we can repeat the argument from the previous case). Each of $A$ and $B$ will contain two non-adjacent vertices. By permuting the first and the second coordinates, we may assume that $\{12,21\}\subset A$ and $\{34,43\}\subset B$. The following are
(interior) vertex-disjoint paths with one endpoint in $\{12,21\}$ and the other endpoint in $\{34,43\}$:
\begin{align*}
Q_1&=\{\{12,1x,3x,34\}:x\geq 5\}\\
Q_2&=\{\{21,x1,x3,43\}:x\geq 5\}\\
Q_3&=\{\{12,y2,y4,34\}:y\geq 5\}\\
Q_4&=\{\{21,2y,4y,43\}:y\geq 5\}\\
Q_5&=\{\{ab,cb,cd\}, \{ab,ac,dc\} :ab\in \{12,21\}, cd, dc \in \{34,43\} \}.
\end{align*}
There are $n-4$ paths in each $Q_i$ for $1\leq i \leq 4$ and there are $8$ paths in $Q_5$. Hence, we have found $4(n-4)+8=4n-8$ (interior) vertex-disjoint paths between $\{12,21\}$ and $\{34,43\}$. This implies $|S|\geq 4n-8>3n-4$ and finishes our proof.
\end{proof}

\section{The $OA(3,n)$ Latin square graphs are OK}\label{sec:oa}

An orthogonal array $OA(t,n)$ with parameters $t$ and $n$ is a $t\times n^2$ matrix with entries from the set $[n]=\{1,\dots,n\}$ such that the $n^2$ ordered pairs defined by any two distinct rows of the matrix are all distinct. An orthogonal array $OA(t,n)$ is equivalent to $t-2$ mutually orthogonal Latin squares (see \cite{BH1} or \cite[Section 10.4]{GR}). Thus, an $OA(3,n)$ is equivalent to a Latin square of order $n$. These are known to exist for any $n\geq 2$ and can be regarded as a generalization of groups as they are equivalent to the multiplication table (Cayley table) of a quasigroup on $n$ elements (see \cite[Chapter 17]{VW}).

Given an orthogonal array $OA(t,n)$, one can define a graph $G$ as follows: the vertices of $G$ are the $n^2$ columns of the orthogonal array and two vertices are adjacent if they have the same entry in one coordinate position. It is known that $G$ is a $(n^2,t(n-1),n-2+(t-1)(t-2),t(t-1))$-SRG (see \cite[Section 10.4]{GR}). It is easy to see that the graph associated to an $OA(2,n)$ is isomorphic to the lattice graph $L_2(n)$. As an $OA(3,n)$ is equivalent to a Latin square of order $n$, a graph obtained from an $OA(3,n)$ orthogonal array is called a Latin square graph and is a $(n^2,3(n-1),n,6)$-SRG. Bruck \cite{Bruck} showed a partial converse to the previous statement by proving that when $n>23$, any $(n^2,3(n-1),n,6)$-SRG is a Latin square graph.

\begin{lem}\label{oa3n} For any $n\geq 4$, if $G$ is the $(n^2,3(n-1),n,6)$-SRG associated with an $OA(3,n)$, then
\begin{equation}
\kappa_2(G)=2k-\lambda-2=5n-8.
\end{equation}
The disconnecting sets of size $5n-8$ are of the form $N(\{u,v\})$ where $u$ and $v$ are two adjacent vertices in $G$ or $N(A)$ where $A=\{[1,x_i,y_i]^t\}_{1\leq i\leq 4}$ induces a clique of order $4$.
\end{lem}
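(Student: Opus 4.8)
The plan is to follow exactly the two-part strategy laid out in Section \ref{sec:general} and used successfully for the lattice graph in Lemma \ref{lattice}: first exhibit disconnecting sets of size $5n-8=2k-\lambda-2$ to get the upper bound $\kappa_2(G)\le 5n-8$, then show every disconnecting set leaving only non-singleton components has size at least $5n-8$. For the upper bound, I would take an edge $uv$; its neighborhood has size $2k-\lambda-2=5n-8$, and I must check that removing $N(\{u,v\})$ leaves the two endpoints (and the rest) in non-singleton components. For the second advertised family, $A=\{[1,x_i,y_i]^t\}_{1\le i\le4}$ is a maximal clique of order $4$ (a parallel class block in the $OA(3,n)$ picture, or a line in the associated net); here I would apply Lemma \ref{lowerbounddiscset} with $t=4$, computing $|N(A)|$ directly and verifying it equals $5n-8$ while $\mu/k=6/(3(n-1))<3/4=1-1/4$ for $n\ge 4$, so by the clique part of Lemma \ref{lowerbounddiscset} no singleton component arises.

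For the lower bound I would mirror the clique/non-clique case split from Lemma \ref{lattice}. \textbf{Clique case:} suppose $A$ induces a clique of size $a$. In a Latin square graph the cliques are of two types — the $3n$ ``line'' cliques of size $n$ coming from the three parallel classes, and the ``grid triangles'' of size at most $3$ (actually maximal cliques of size $4$ in the affine-plane/net interpretation). I would compute $|N(A)|$ as a function of $a$ and the clique type, using the $OA$ incidence structure to count neighbors, and check the minimum over all admissible cliques is attained either at an edge ($a=2$) or at the order-$4$ clique, both giving $5n-8$, while all other cliques give strictly more. \textbf{Non-clique case:} here, as in Lemma \ref{lattice}, I may assume no component of $B$ is a clique either, so both $A$ and $B$ contain a pair of non-adjacent vertices. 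I would then construct explicitly a family of at least $5n-8$ internally vertex-disjoint paths joining a fixed $K_{1,2}$-type configuration in $A$ to one in $B$, grouped as in the $Q_1,\dots,Q_5$ construction for the lattice graph but adapted to the three coordinate positions of the $OA(3,n)$. By Menger's theorem, separating these forces $|S|\ge5n-8$, and I would arrange the count so that for $n$ large this is strictly larger than $5n-8$, pinning down the extremal configurations.

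The main obstacle is the lower-bound non-clique case: unlike the lattice graph, whose adjacency only involves two coordinates, the Latin square graph's third coordinate is governed by the (arbitrary) Latin square, so the path families cannot be written down in closed coordinate form without invoking the specific $OA$. The delicate point is guaranteeing enough internally-disjoint paths using only the combinatorial axioms of an $OA(3,n)$ — that any two of the $n^2$ columns agree in at most one of the three rows, and that each symbol appears exactly $n$ times in each row. I expect to need a careful counting argument (or a direct appeal to Lemma \ref{HL1}) to cover the regime where $A$ and $B$ are both large, combined with the explicit path construction when $|A|$ is small; reconciling these two sub-regimes so that the bound $5n-8$ holds uniformly for all $n\ge4$, and extracting the exact equality cases, will be the technically hardest part. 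A secondary subtlety, flagged by the referee's remark in Section \ref{sec:general}, is that a minimum disconnecting set $S$ need not equal $N(A)$ if $S$ swallows some vertex together with its whole neighborhood; I would handle this by the same reduction used there, noting that in the equality analysis one always has $N(A)\subseteq S$ with $|N(A)|\ge 5n-8$.
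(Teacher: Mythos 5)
Your plan is essentially the paper's proof: the paper also splits into the case where $A$ (or $B$) is a clique, computes $|N(A)|$ by inclusion--exclusion separately for the two clique types (the ``line'' cliques inside a parallel class and the grid-type cliques governed by the Latin square), finds the minimum $5n-8$ exactly at an edge or at the order-$4$ clique $\{[1,x_i,y_i]^t\}$, and in the non-clique case builds $6n-12$ internally disjoint paths in the style of $Q_1,\dots,Q_5$, using the wildcard notation $[x,y,*]^t$ (legitimate by the $OA$ axioms) to sidestep the third-coordinate issue you worried about, with no need for Lemma \ref{HL1}. The one point your plan omits is $n=4$, where $6n-12>5n-8$ fails and the length-$3$ path families are empty, so that case must be handled separately (the paper dispatches it with Lemma \ref{smallv}).
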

\begin{proof}
If $n=4$, then the graph $G$ is an $(16,9,4,6)$-SRG which satisfies the conjecture by Lemma \ref{smallv}.

Assume $n\geq 5$ for the rest of the proof. We will show that $|S|\geq 2k-\lambda-2$ with equality if and only if $S$ is as described in the Lemma \ref{oa3n}.

We have two cases:



1. Either $A$ or $B$ induces a clique in $G$.

Without any loss of generality assume that $A$ is a clique. If $|A|=2$, then there is nothing to prove.

Assume that $|A|=r\geq 3$.

If $r=3$, then without loss of generality we have two possible situations:


a) $A=\{[1,x_i,y_i]^t\}_{1\leq i\leq 3}$, where $x_i\neq x_j$ and $y_i\neq y_j$ for $1\leq i<j\leq 3$.

In this case, the common neighbors of the vertices in $A$ are the vertices of the form $[1,u,v]^t$ where $u\in [n]\setminus \{x_1,x_2,x_3\}$ and there are $n-3$ such vertices. Inclusion and exclusion and $n\geq 5$ imply that
\begin{equation*}
|S|\geq |N(A)|=3(k-2)-3(\lambda-1)+(n-3)=7n-15>5n-8.
\end{equation*}


b) $A=\{[1,x_1,y_1]^t, [1,x_2,y_2]^t,[2,x_1,y_2]^t\}$, where $x_1\neq x_2$ and $y_1\neq y_2$.

In this case, the three vertices of $A$ could have at most one common neighbor $[2,x_2,y_1]^t$ (this happens if the only vertex of $G$ whose first two coordinates are $2$ and $x_2$ is $[2,x_2,y_1]^t$). Inclusion and exclusion and $n\geq 5$ imply that
\begin{equation*}
|S|\geq |N(A)|\geq 3(k-2)-3(\lambda-1)=6n-12>5n-8.
\end{equation*}

If $r\geq 4$, then without loss of generality
\begin{equation}\label{oa1}
A=\{[1,x_1,y_1]^t, [1,x_2,y_2]^t, [1,x_3,y_3]^t,\dots, [1,x_r,y_r]^t\},
\end{equation}
where $x_i\neq x_j$ and $y_i\neq y_j$ for all $1\leq i<j\leq r$ or $r=4$ and
\begin{equation}\label{oa2}
A=\{[1,x_1,y_1]^t, [1,x_2,y_2]^t, [2,x_1,y_2]^t, [2,x_2,y_1]^t\},
\end{equation}
where $x_1\neq x_2$ and $y_1\neq y_2$. Note that the second situation may or may not happen.

We will use the following notation for the rest of the proof. Given two elements $x,y\in [n]$, $[x,y,*]^t$ will denote the vertex of $G$ whose 1st entry is $x$ and 2nd entry is $y$. We define $[x,*,z]^t$ and $[*,y,z]^t$ similarly.

When $A$ is the set given in \eqref{oa1}, the set $N(A)$ will consist of the following vertices:
\begin{align*}
[1,\beta,*]^t &, \quad \beta \neq x_i,\quad  \forall i, 1\leq i\leq r\\
[\ast,x_i,\epsilon]^t &, \quad \epsilon\neq y_j, \quad  \forall i,j,  1\leq i,j\leq r\\
[*, x_i, y_j]^t &, \quad \forall i,j, \quad 1\leq i\neq j\leq r.
\end{align*}
This implies $|N(A)|=(n-r)(r+1)+r(r-1)=(r+1)n-2r$. If $r\geq 5$, we get $|S|\geq |N(A)|\geq 6n-10>5n-8$ as $n\geq 5$.  If $r=4$, then $|S|\geq |N(A)|=5n-8=2k-\lambda-2$ with equality if and only if $S=N(A)$.

When $A$ is the set given in \eqref{oa2}, then any three distinct vertices of $A$ will have no common neighbors in $N(A)$. Inclusion and exclusion and $n\geq 5$ imply that
\begin{align*}
|S|&\geq |N(A)|=4(k-3)-6(\lambda-2)=6n-12>5n-8.
\end{align*}


2. Both $A$ and $B$ do not induce a clique in $G$.

In this case, $A$ must contain two non-adjacent vertices $[x_1,x_2,x_3]^t$ and $[y_1,y_2,y_3]^t$ and $B$ must contain two non-adjacent vertices $[z_1,z_2,z_3]^t$ and $[w_1,w_2,w_3]^t$. Because there are no edges between $A$ and $B$, it follows that these four vertices are pairwise non-adjacent and thus, $x_i,y_i,z_i$ and $w_i$ are distinct for every $i\in \{1,2,3\}$.

The following are (interior) vertex-disjoint paths of length $3$ from $[x_1,x_2,x_3]^t$ to $[z_1,z_2,z_3]^t$:
\begin{align}
[x_1,x_2,x_3]^t, [x_1,u,*]^t, & [z_1,u,*]^t, [z_1,z_2,z_3]^t, \forall u\in [n]\setminus \{x_2,y_2,z_2,w_2\} \\
[x_1,x_2,x_3]^t, [*,x_2,v]^t, & [*,z_2,v]^t, [z_1,z_2,z_3]^t, \forall v\in [n]\setminus \{x_3,y_3,z_3,w_3\} \\
[x_1,x_2,x_3]^t, [s,*,x_3]^t, & [s,*,z_3]^t, [z_1,z_2,z_3]^t, \forall s\in [n]\setminus \{x_1,y_1,z_1,w_1\}
\end{align}

The following are (interior) vertex-disjoint paths of length $3$ from $[y_1,y_2,y_3]^t$ to $[w_1,w_2,w_3]^t$:
\begin{align}
[y_1,y_2,y_3]^t, [y_1,u,*]^t, & [w_1,u,*]^t, [w_1,w_2,w_3]^t, \forall u\in [n]\setminus \{x_2,y_2,z_2,w_2\} \\
[y_1,y_2,y_3]^t, [*,y_2,v]^t, & [*,w_2,v]^t, [w_1,w_2,w_3]^t, \forall v\in [n]\setminus \{x_3,y_3,z_3,w_3\} \\
[y_1,y_2,y_3]^t, [s,*,y_3]^t, & [s,*,w_3]^t, [w_1,w_2,w_3]^t, \forall s\in [n]\setminus \{x_1,y_1,z_1,w_1\}
\end{align}

The following are (interior) vertex-disjoint paths of length $2$ between $[x_1,x_2,x_3]^t$ to $[z_1,z_2,z_3]^t$. To simplify our notation, we only list the middle vertex of each path:
\begin{equation*}
[x_1,z_2,*]^t; [z_1,x_2,*]^t; [x_1,*,z_3]^t; [z_1,*,x_3]^t; [*,x_2,z_3]^t;[*,z_2,x_3]^t.
\end{equation*}
The following are (interior) vertex-disjoint paths of length $2$ between $[y_1,y_2,y_3]^t$ and $[w_1,w_2,w_3]^t$. Again, we only list the middle vertex of each path:
\begin{equation*}
[y_1,w_2,*]^t; [w_1,y_2,*]^t; [y_1,*,w_3]^t; [w_1,*,y_3]^t; [*,y_2,w_3]^t; [*,w_2,y_3]^t.
\end{equation*}

Hence, there are at least $2\cdot 3\cdot (n-4)+2\cdot 6=6n-12$ interior vertex-disjoint paths between $A$ and $B$. As $n\geq 5$, this implies $|S|\geq 6n-12>5n-8$ and finishes our proof.
\end{proof}

\section{Primitive strongly regular graphs with at most 30 vertices}\label{sec:primitive}

In general, Brouwer's Conjecture is false. In this section we check which parameters of small strongly regular graphs from the list \cite{BH2}
satisfy the conjecture. 

The following useful property is an immediate consequence of Cauchy's interlacing theorem (see \cite{BH1,GR,H1} for more details).
\begin{property}\label{property}
Let $\alpha$ and $\beta$ be the largest eigenvalues of the subgraphs of $G$ induced by $A$ and $B$, respectively. Then $min(\alpha, \beta) \leq \theta_2(G)$.
\end{property}
\begin{proof}
By Cauchy's interlacing theorem, the eigenvalues of the subgraph of $G$ induced by $A\cup B$ interlace the eigenvalues of $G$. Thus, $\theta_2(G)$ is at least the second largest eigenvalue of the subgraph induced by $A\cup B$ which is at least $\min(\alpha, \beta)$ (as $A$ and $B$ are not connected by any edges).
\end{proof}

The previous property enables us to show that Brouwer's Conjecture is true when $\theta_2$ is very small.
\begin{prop}\label{conchang}
If $G$ is a connected SRG such that the second largest eigenvalue $\theta_2 < \sqrt{2}$, then $G$ is OK.
\end{prop}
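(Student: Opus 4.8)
The plan is to combine Property \ref{property} with the elementary fact that a connected graph of small spectral radius must itself be small. Assume $G$ possesses a disconnecting set whose removal leaves only non-singleton components (otherwise $G$ is OK by definition), and let $S$ be a minimum such set, let $A$ be a component of $G\setminus S$ of smallest order, and set $B=V(G)\setminus(A\cup S)$, so that $N(A)\subseteq S$ as in the general setup of Section \ref{sec:general}. Writing $\alpha,\beta$ for the largest eigenvalues of the subgraphs induced on $A$ and $B$, Property \ref{property} gives $\min(\alpha,\beta)\le\theta_2<\sqrt2$.

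First I would isolate the following sublemma: any connected graph $H$ with largest eigenvalue strictly less than $\sqrt2$ is either $K_1$ or $K_2$. The point is that the path $P_3=K_{1,2}$ has largest eigenvalue exactly $\sqrt2$, and since the adjacency spectral radius is monotone under passing to subgraphs, any graph that contains a vertex of degree at least $2$ (hence an induced or spanning $K_{1,2}$) has largest eigenvalue at least $\sqrt2$. Thus $H$ has maximum degree at most $1$, and being connected it has at most two vertices. I would then use this to force $A$ to be a single edge: if $\alpha<\sqrt2$ then $A$, connected and non-singleton, is a $K_2$; if instead $\beta<\sqrt2$, then every component of the subgraph induced on $B$ has largest eigenvalue below $\sqrt2$ and is therefore a $K_2$, so $B$ has a component of size $2$, and since $A$ was chosen of smallest order this again forces $|A|=2$. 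Either way $A=\{x,y\}$ is an edge, whence $N(A)=N(\{x,y\})$ and $|S|\ge|N(A)|=2k-\lambda-2$. This already yields $\kappa_2(G)\ge 2k-\lambda-2$, i.e. Brouwer's bound holds.

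To upgrade the inequality to the equality $\kappa_2(G)=2k-\lambda-2$ required for $G$ to be OK, I would exhibit a valid cut attaining the bound, the natural candidate being the edge-neighborhood $N(\{x,y\})$ itself, whose removal leaves $\{x,y\}$ as one non-singleton component. I expect the main obstacle to lie exactly here: one must check that no singleton component is created on the other side. Any such singleton would be a vertex $w$ with $N(w)\subseteq N(A)$ and $w$ non-adjacent to both $x$ and $y$; the vertices of the original $A\cup B$ cannot be of this type (each keeps a neighbor in its old component), so the only danger comes from vertices that the minimal set $S$ carries in $S\setminus N(A)$ together with all of their neighbors, precisely the phenomenon flagged in Section \ref{sec:general}. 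The spectral-radius sublemma and the reduction to $A=K_2$ are routine; the delicate part of the argument is ruling out, or otherwise absorbing, these extra vertices so that some cut of size exactly $2k-\lambda-2$ is genuinely valid.
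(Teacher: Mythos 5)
Your argument is correct and is essentially the paper's proof read in the contrapositive: the paper assumes a disconnecting set with $|A|\ge 3$ and every component of $B$ of size at least $3$, notes that each side then contains a $K_3$ or an induced $P_3$ (largest eigenvalues $2$ and $\sqrt2$ respectively), and contradicts $\theta_2<\sqrt2$ via Property \ref{property} — which is exactly your spectral-radius sublemma combined with interlacing. The residual issue you flag in your last paragraph, namely exhibiting a cut of size exactly $2k-\lambda-2$ leaving no singleton so as to get $\kappa_2(G)=2k-\lambda-2$ rather than merely $\kappa_2(G)\ge 2k-\lambda-2$, is not addressed by the paper's proof either; throughout the paper ``OK'' is in practice certified by proving the lower bound $|S|\ge 2k-\lambda-2$ for every disconnecting set whose removal leaves only non-singleton components, which is the content of Brouwer's Conjecture and exactly what your first two paragraphs establish.
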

\begin{proof}
Suppose that $G$ is not OK. Assume $V(G)=A\cup S\cup B$, where $S$ is a disconnecting set, $N(A)\subset S$, $B=V(G)\setminus (A \cup S), |A| \geq 3$ and each component of $B$ has at least $3$ vertices. It follows that each of $A$ and $B$ has a clique of order $3$ or a path with $3$ vertices as an induced subgraph. The largest eigenvalues of a clique of size $3$ and a path with $3$ vertices are $2$ and $\sqrt{2}$, respectively. Therefore, by Property \ref{property} we obtain $\theta_2 \geq \sqrt{2}$, a contradiction.
\end{proof}

\begin{example}[$(26,15,8,9)$-SRGs]\label{26graph}
Let $\Gamma$ be a $(26,15,8,9)$-SRG (There are exactly 10 of these graphs \cite{BH2}).
Let $A$ be a subset of vertices that induces a connected graph such that $V\setminus S$ is the disjoint union of $A$ and $B$ with $|B|\geq |A| \geq 3$.
Since $2k-\lambda-2 = 20$, $\theta_2 = 2$, and the complement of $(26,15,8,9)$-SRG has $\overline{\lambda} = 3$ and $\overline{\mu} = 4$, we may assume that we are in one of the following two situations:
\begin{enumerate}
\item $A$ and $B$ such that $|A|=|B|=4$.
Then $A$ and $B$ are cliques as otherwise $\overline{\lambda} \geq 4$. But by Property \ref{property}, this is a contradiction with $\theta_2 = 2$. One can also use Lemma \ref{haemersbnd} to obtain a contradiction in this case.
\item $A$ and $B$ such that $|A| = 3$ and $|B| \geq 4$.
Then $A$ is a triangle and $|B| = 4$ as $\overline{\lambda} = 3$ and $\overline{\mu} = 4$.
\end{enumerate}

First, we show that the induced subgraph $B$ is a cycle on $4$ vertices $C_4$.
Since by inclusion and exclusion, $19 = |N(A)| = 3(k-2) -3(\lambda -1) +$ (the number of common neighbors of $A$) and $3k-3\lambda-3=18$, there exists exactly one common neighbor of $A$, say $d$.
Since $\lambda = 8$,
\begin{enumerate}
\item[($\ast$)] $|N(a) \cap N(a') \cap N(A) \setminus \{d\} | = 6$ for distinct $a, a' \in A$, i.e., $|A \cap N(c)| = 2$ for each $c \in N(A) \setminus \{d\}$.
\end{enumerate}
Fix a vertex $b \in B$. Since $\mu = 9$, there are $|A|\cdot \mu=27$ paths of length $2$ between $b$ and $A$. By ($\ast$), $b$ is adjacent to $d$ and $|N(A) \cap N(b)| = 13$. This implies that $B$ induces a $C_4$ and $d$ is adjacent to all vertices of $B$.

Next, we consider $C:= N(A) \cap N(d)$. Since $\lambda = 8$, $|N(a) \cap C|=6$ for each $a \in A$. This implies that the number of common neighbors of $d$ and all of vertices of $A$ is at least $2$, a contradiction. Therefore, $\Gamma$ is OK.
\end{example}

\begin{example}[The Schl\"{a}fli graph]\label{sch}
Let $\Gamma$ be a $(27, 16, 10, 8)$-SRG. Seidel \cite{S} has shown that there is a unique strongly regular graph with these parameters and for each vertex $w \in V(\Gamma)$, the subgraph induced by $N_1(w)$ is the halved $5$-cube.

Let $C$ be a subset of $\Gamma$.
\begin{enumerate}
\item[($\ast$)] If $C$ is a triangle, then since the halved 5-cube has $\lambda=6$,
the inclusion and exclusion principle yields $|N(C)|=3(k-2)-3(\lambda-1) + 6 = 21$.
\item[($\ast \ast$)] If $C$ is a path of length $2$, then since the halved $5$-cube has $\mu=6$,
the inclusion and exclusion principle yields $|N(C)|= 2(k-1) + (k-2) - 2\lambda - (\mu-1) + 6 = 23$.
\end{enumerate}

Let $A$ be a subset with at least $3$ vertices. Assume that the subgraph induced by $A$ is connected and $B:= V \setminus (A \cup S)$ satisfies $|A| \leq |B|$.
Then $A$ contains a triangle or a path of length $2$. Now by ($\ast$) and ($\ast \ast$), $|S| + |A| \geq 24$.
So $|B| \leq 2$, a contradiction. Therefore, the Schl\"{a}fli graph is OK.
\end{example}

In the next table, we report the strongly regular graphs with at most $30$ vertices. The symbol $\circ$ means `For all examples, Brouwer's Conjecture is true'. The symbol $\times$ means `There exists at least one counterexample'.

\medskip

\begin{tabular}{p{3cm}ccccccccc}
\hline
 No. & $v$      &   $k$     &     $\lambda$    &      $\mu$       &    $r^f$    & $s^g$     &  Brouwer's Conjecture   &     Comment \\
\hline
1    &     5       &     2     &  0            &      1        &     $0.618^2$   &    $-1.618^2$     &     $\circ$     &  L \ref{smallv}   \\
2    &    9       &     4    &  1              &     2        &      $1^4$      &    $ -2^4$       &     $\circ$     &  L \ref{smallv}   \\
3    &    10       &     3    &  0              &     1        &      $1^5$      &    $ -2^4$       &     $\circ$     &  P \ref{kSRG}   \\
$\overline{3}$    &    10       &     6    &  3              &     4        &      $1^4$      &    $ -2^5$       &     $\circ$     &  $\kappa_2(G)=\infty$   \\
4    &    13       &     6    &  2              &     3        &      $1.303^6$      &    $ -2.303^6$       &     $\circ$   &  L \ref{diff}   \\
5    &    15       &     6    &  1              &     3        &      $1^9$      &    $ -3^5$       &     $\circ$     &  P \ref{conchang}   \\
$\overline{5}$    &    15       &     8    &  4              &     4        &      $2^5$      &    $ -2^9$       &     $\times$     &  P \ref{triangular} \\

6    &    16       &     5    &  0              &     2        &      $1^{10}$      &    $ -3^5$       &     $\circ$   & P \ref{kSRG}   \\
$\overline{6}$    &    16       &     10    &  6              &     6        &      $2^5$      &    $ -2^{10}$       &     $\circ$   &  L \ref{smallv}   \\
7    &    16       &     6    &  2              &     2        &      $2^6$      &    $ -2^9$       &     $\circ$   &  P \ref{kSRG}   \\
$\overline{7}$   &    16       &     9    &  4              &     6        &      $1^9$      &    $ -3^6$       &     $\circ$   &  L \ref{smallv}   \\
8   &     17       &     8     &  3            &      4        &     $1.562^8$   &    $-2.562^8$     &     $\circ$     &  L \ref{diff}   \\
9    &     21       &     10     &  3            &      6        &     $1^{14}$   &    $-4^6$     &     $\circ$     &  P \ref{conchang}   \\
$\overline{9}$   &     21       &     10     &  5            &      4        &     $3^6$   &    $-2^{14}$     &     $\times$     &  P \ref{triangular}   \\

10    &     25       &     8     &  3           &      2       &     $3^8$   &    $-2^{16}$     &     $\circ$     &  L \ref{lattice}   \\
$\overline{10}$    &     25       &     16     &  9      &      12        &     $1^{16}$   &    $-4^8$     &     $\circ$     &  L \ref{smallv}   \\
11    &     25       &     12     &  5     &     6     &     $2^{12}$   &    $-3^{12}$     &     $\circ$     &  L \ref{diff}   \\

12    &     26       &     10     &  3     &     4     &     $2^{13}$   &    $-3^{12}$     &   $\circ$   &  L \ref{diff} \\
$\overline{12}$    &     26       &     15     &  8     &     9     &     $2^{12}$   &    $-3^{13}$     &  $\circ$        &  E \ref{26graph} \\
13    &     27       &     10     &  1     &     5     &     $1^{20}$   &    $-5^6$     &   $\circ$       &  P \ref{conchang} \\
$\overline{13}$    &     27       &     16     &  10     &     8     &     $4^6$   &    $-2^{20}$     &  $\circ$        &  E \ref{sch}   \\
14    &     28       &     12     &  6     &     4     &     $4^7$   &    $-2^{20}$     &   $\times$        &  P \ref{triangular}    \\
$\overline{14}$    &     28       &     15     &  6     &     10     &     $1^{20}$   &    $-5^7$     &  $\circ$       & P \ref{conchang}   \\
15    &     29       &     14     &  6     &     7     &     $2.193^{12}$   &    $-3.193^{14}$     &   $\circ$       &  L \ref{diff}  \\
\hline
\end{tabular}

\medskip

In order to prove that the three Chang graphs (these are $(28,12,6,4)$-SRGs which are not $T(8)$) are OK, we use some work of Delsarte including the notion of a Delsarte clique which we briefly describe below.

Delsarte \cite[p. 31]{del} obtained a linear programming bound for cliques in
strongly regular graphs. It was observed by Godsil \cite[p. 276]{Godsilac} that
the same bound holds for distance-regular graphs, as follows.

\begin{prop}\label{delbound} Let $\Gamma$ be a distance-regular graph with
valency $k$ and smallest eigenvalue $\theta_{\min}$. If $C$ is a clique in $\Gamma$ with $c$ vertices, then $c \leq 1+\frac{k}{-\theta_{\min}}.$
\end{prop}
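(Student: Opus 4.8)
The plan is to prove the Delsarte clique bound $c \leq 1 + \frac{k}{-\theta_{\min}}$ via the standard eigenvalue interlacing / equitable partition argument. The key observation is that a clique $C$ of size $c$ induces a very regular partition structure on $\Gamma$: consider the partition of the vertex set into the clique $C$ and its complement $V(\Gamma) \setminus C$. I would first set up the quotient matrix (interlacing) approach, which is the cleanest route and requires only that $\Gamma$ be $k$-regular together with control of the smallest eigenvalue.

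The main computation is as follows. Partition the adjacency matrix $A$ of $\Gamma$ according to $\{C, V \setminus C\}$ and form the $2 \times 2$ quotient matrix $B$ of average row sums:
\begin{equation*}
B = \begin{bmatrix} c-1 & k-(c-1) \\ \frac{c(k-c+1)}{v-c} & k - \frac{c(k-c+1)}{v-c} \end{bmatrix},
\end{equation*}
where the top-left entry is $c-1$ because $C$ is a clique so every vertex of $C$ has exactly $c-1$ neighbors inside $C$, and the row sums are all $k$ by regularity. By the interlacing theorem for quotient matrices (a consequence of Cauchy interlacing, which the paper already invokes in Property \ref{property}), the eigenvalues of $B$ interlace those of $A$. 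The largest eigenvalue of $B$ is $k$, and the second eigenvalue $\eta$ of $B$ therefore satisfies $\eta \geq \theta_{\min}$. Since the trace of $B$ equals $k + \eta$, I can read off $\eta = (c-1) + k - \frac{c(k-c+1)}{v-c} - k = (c-1) - \frac{c(k-c+1)}{v-c}$, and the inequality $\eta \geq \theta_{\min}$ rearranges directly into the desired bound $c \leq 1 + \frac{k}{-\theta_{\min}}$.

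An alternative, perhaps slicker, route avoids computing $v$ altogether: take the principal submatrix of $A$ indexed by $C$, which is $J_c - I_c$ (the adjacency matrix of $K_c$) with smallest eigenvalue $-1$, and combine this with a cleverly chosen test vector. Concretely, I would apply interlacing to a $2$-dimensional space spanned by the characteristic vector of $C$ and a suitable vector supported on $C$, or equivalently use the ratio bound (Hoffman-type) applied to the induced subgraph. The cleanest formulation uses the fact that for the clique $C$, the average of $x^\top A x / x^\top x$ over an appropriate vector orthogonal to the all-ones vector produces the eigenvalue $-1$ for $K_c$, which must be at least $\theta_{\min}$ after accounting for the regularity correction; this yields $c - 1 \leq \frac{k}{-\theta_{\min}}$ after the algebra.

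I expect the main obstacle to be purely presentational rather than conceptual: choosing which interlacing formulation gives the shortest honest derivation, and making sure the quotient-matrix entries are computed correctly (in particular that the off-diagonal block counts are consistent with $k$-regularity). There is no serious difficulty, since the result is classical and both the distance-regularity and the strong regularity are far stronger than what the bound actually needs — regularity plus knowing $\theta_{\min}$ suffices. I would present the quotient-matrix version as the main proof because it is self-contained given the interlacing theorem already cited in the paper.
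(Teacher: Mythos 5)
The paper does not actually prove Proposition \ref{delbound}; it quotes the bound from Delsarte and Godsil. So the real question is whether your argument would stand on its own, and it would not: the final step of your main route fails. With the quotient matrix you wrote down, interlacing gives $\theta_{\min}\leq\eta\leq\theta_2$ where $\eta=(c-1)-\frac{c(k-c+1)}{v-c}$. The inequality $\eta\geq\theta_{\min}$ rearranges to $(c-1)+(-\theta_{\min})\geq\frac{c(k-c+1)}{v-c}$, which is an upper bound on a quantity that tends to $0$ as $v\to\infty$; it is satisfied for every $c\leq k+1$ once $v$ is large and cannot yield $c\leq 1+\frac{k}{-\theta_{\min}}$ (note that your target contains no $v$, so a non-identity rearrangement of an inequality containing $v$ should already be suspect). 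The other interlacing inequality, $\eta\leq\theta_2$, does do work, but it produces a clique bound in terms of $v$ and $\theta_2$, not $\theta_{\min}$; for the $d$-cube ($k=d$, $\theta_2=d-2$, $\theta_{\min}=-d$) it fails to exclude triangles for $d\geq 4$, whereas the Delsarte bound gives $c\leq 2$. So the partition $\{C,V\setminus C\}$ is simply the wrong tool here.

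The deeper issue is your closing claim that ``regularity plus knowing $\theta_{\min}$ suffices.'' It does not: the Delsarte--Hoffman clique bound genuinely uses distance-regularity (more precisely, $1$-walk-regularity). The standard proof takes the minimal idempotent $E_{\min}$, i.e.\ the orthogonal projection onto the $\theta_{\min}$-eigenspace, and uses the fact that for a distance-regular graph its entries are constant on the diagonal ($f/v$, with $f$ the multiplicity) and constant on edges ($\frac{f}{v}\cdot\frac{\theta_{\min}}{k}$, since $AE_{\min}=\theta_{\min}E_{\min}$ and $E_{\min}$ is a polynomial in $A$). Then, for the characteristic vector $\chi$ of the clique,
\begin{equation*}
0\leq \chi^{t}E_{\min}\chi=\frac{cf}{v}\Bigl(1+(c-1)\frac{\theta_{\min}}{k}\Bigr),
\end{equation*}
which is exactly $c\leq 1+\frac{k}{-\theta_{\min}}$. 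Your ``alternative route'' gestures in this direction, but the inequality you extract there ($-1\geq\theta_{\min}$ ``after a regularity correction'') is vacuous as stated, and the place where distance-regularity enters --- the constancy of $E_{\min}$ on edges --- is precisely what is missing from your write-up. I would replace the quotient-matrix argument by the idempotent computation above (or simply cite Delsarte and Godsil, as the paper does).
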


\noindent A clique $C$ in a distance-regular graph $\Gamma$ that attains the
above bound is called a {\em Delsarte clique}. Lemmas 13.7.2 and 13.7.4  in \cite{Godsilac}
characterize such cliques.

\begin{example}[$(28,12,6,4)$-SRGs]\label{28graph}
Let $\Gamma$ be a $(28,12,6,4)$-SRG (There are exactly 4 of these graphs \cite{BH2}, namely $T(8)$ and the three Chang graphs.).
Suppose that $\Gamma$ is a counterexample of Brouwer's Conjecture. By Lemma \ref{HL1} and $2k -\lambda -2 = 16$,  $\frac{4ab}{9} \leq |S| \leq 15$.
Then $ab \leq 33$ and the only integral solutions with $3 \leq a \leq b$ and $a + b \geq 13$ are $a = 3$ and $b =10, 11$. But $b \leq \overline{\mu}=10$.
So $a = 3$ and $b =10$. Now as $\overline{\lambda} = 6$ we obtain that $A$ is a triangle, and we see that every vertex in $S$ is adjacent to exactly two vertices in $A$.

For $x \in A$, let $S_x = \{ w \in S \mid w \not\sim x\}$. As $\lambda=10$, we deduce that $|S_x| = 5$.  As $x\in A$ and $w\in S_x$ have $4$ common neighbors
with two of these common neighbors being in $A$,  and $x\in A$ and $w'\in S\setminus S_x$ have $6$ common neighbors with one of these common neighbors being in $A$,
we deduce that each $S_x$ is a clique of order $5$ for $x\in A$. Also, we deduce that $w\in S_x$ has exactly two neighbors in $S\setminus S_x$.

By similar arguments, considering a vertex $x\in A$, a vertex $u\in B$ and using $\mu=4$, we deduce that every vertex $u\in B$ has exactly two neighbors in $S_x$ and thus, has exactly $6$ neighbors in $S$.

Next, we consider the partition $\pi = A \cup S \cup B$ of $V(\Gamma)$. From the previous arguments, we deduce that $\pi$ is an equitable partition (see \cite[Chapter 9]{GR}) whose quotient matrix is the following:
\begin{equation*}
Q=
\begin{bmatrix}
2 & 10 & 0   \\
2 & 6 & 4   \\
0 & 6 & 6   \\
\end{bmatrix}.
\end{equation*}

We claim that $S$ is the $3 \times 5$ grid and $B$ is the triangular graph $T(5)$.

By Cauchy's interlacing theorem, the eigenvalues of the subgraph of $\Gamma$ induced by $B$ interlace the eigenvalues of $\Gamma$. In particular, smallest eigenvalue of $B$ is at least $-2$. By Theorem 3.12.2(i) of \cite{BCN}, $B$ is isomorphic to the line graph of a regular or bipartite semiregular connected graph $\Delta$. Since $B$ is a $6$-regular graph of order $10$, it is easily checked that $\Delta$ is a complete graph of order $5$. So $B$ is the line graph of $\Delta$, that is  $T(5)$.

For $w \in S$, we consider the set $N(w) \cap B$ of size $4$. As $T(5)$ has $\mu =4$, we see that $N(w) \cap B$ is a clique of order $4$,  and each $w \in S$ corresponds to such a clique. Now $T(5)$ has exactly 5 such cliques.

The vertices in $S$ corresponding to the same clique of order $4$ in $T(5)$ are adjacent since $\mu = 4$. It follows that they together form a clique $C$ of order 7 as the Delsarte bound is $7$, and hence they are a Delsarte clique. This implies that any vertex outside $C$ are adjacent to exactly two neighbors in $C$.
It follows that $S$ is the $3 \times 5$-grid and any maximal triangle corresponds to a maximal clique in $T(5)$. It follows that $\Gamma$ is the triangular graph $T(8)$.
\end{example}

The main part of the proof of Brouwer and Mesner \cite{BM} is to show that their result holds for the case of strongly regular graphs with smallest eigenvalue $-2$. In the next proposition, we discuss the strongly regular graphs with smallest eigenvalue $-2$.

\begin{prop}\label{sei}
Among the strongly regular graphs with smallest eigenvalue $-2$, the only counterexamples of Brouwer's Conjecture are the triangular graphs $T(m)$, where $m \geq 6$.
\end{prop}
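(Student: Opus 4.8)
The plan is to reduce the statement to a finite case check by invoking Seidel's classification \cite{S} of the connected strongly regular graphs with smallest eigenvalue $-2$, and then to dispatch each case using the criteria already established. According to that classification, a connected $(v,k,\lambda,\mu)$-SRG with smallest eigenvalue $-2$ is one of the following: a triangular graph $T(m)$ with $m\geq 5$; a lattice graph $L_2(n)$ with $n\geq 3$; a cocktail party graph $K_{m\times 2}$ (the complete multipartite graph with $m$ parts of size $2$) with $m\geq 2$; or one of the six exceptional graphs, namely the Petersen graph, the Shrikhande graph, the three Chang graphs and the Schl\"afli graph. Since Proposition \ref{triangular} already identifies $T(m)$ as a counterexample exactly when $m\geq 6$, the whole task is to show that every remaining graph on this list is OK.

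First I would treat the infinite families. For $L_2(n)$ with $n\geq 3$, Lemma \ref{lattice} gives $\kappa_2(L_2(n))=2k-\lambda-2$, so these are OK. For the cocktail party graph $K_{m\times 2}$ I would make the elementary observation that deleting any set of vertices leaves an induced complete multipartite graph; such a graph is connected unless only a single part survives, in which case all surviving vertices are isolated. Hence $K_{m\times 2}$ has no disconnecting set whose removal leaves only non-singleton components, so it is OK by definition. This also disposes of the two remaining small triangular graphs: $T(4)=K_{3\times 2}$ is a cocktail party graph, and $T(5)$ was already shown to be OK in Section \ref{sec:triangular} via Lemma \ref{smallv}.

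Next I would handle the six exceptional graphs by quoting earlier results. The Petersen graph $(10,3,0,1)$ satisfies the hypothesis \eqref{kSRGeq} of Proposition \ref{kSRG}, since $4(k-2\lambda)(k-\mu)=24>3=(\lambda-\mu)^2(2k-\lambda-3)$; the Shrikhande graph $(16,6,2,2)$ has $\lambda=\mu$, so it is OK either by Proposition \ref{kSRG} or by Lemma \ref{diff}. The three Chang graphs $(28,12,6,4)$ are OK by Example \ref{28graph}, where it is proved that the only $(28,12,6,4)$-SRG possessing a small disconnecting set with non-singleton components is $T(8)$ itself. Finally, the Schl\"afli graph $(27,16,10,8)$ is OK by Example \ref{sch}. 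Combining these cases with Proposition \ref{triangular} then yields the proposition.

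I expect essentially no obstacle in the assembly itself: the genuinely hard work is already front-loaded into Examples \ref{28graph} and \ref{sch}, whose equitable-partition and Delsarte-clique arguments are the real content. Within the present proposition the only new ingredient is the cocktail party argument, and the only point demanding care is the bookkeeping at the boundary --- making sure the degenerate and parameter-sharing cases ($T(4)$, $T(5)$, $L_2(2)=K_{2\times 2}$, and the Shrikhande graph sharing parameters with $L_2(4)$) are each routed to a parameter-only criterion (Lemma \ref{smallv}, Proposition \ref{kSRG}, or Lemma \ref{diff}) rather than to a family-specific result.
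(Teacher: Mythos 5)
Your overall strategy is exactly the paper's: invoke Seidel's classification of the strongly regular graphs with smallest eigenvalue $-2$ and then dispatch each member of the list using results already established (Proposition \ref{triangular} for $T(m)$, Lemma \ref{lattice} for $L_2(n)$, Proposition \ref{kSRG} or Lemma \ref{diff} for the small exceptional graphs, and Examples \ref{sch} and \ref{28graph} for the Schl\"afli and Chang graphs). The one genuine problem is that your statement of the classification is incomplete: besides the triangular graphs, the lattice graphs, the cocktail party graphs, the Petersen graph, the Shrikhande graph, the Schl\"afli graph and the three Chang graphs, Seidel's list also contains the Clebsch graph in the sense relevant here, namely the unique $(16,10,6,6)$-SRG (the complement of the folded $5$-cube), with spectrum $10,\,2^{5},\,(-2)^{10}$. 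This graph coincides with none of the graphs you enumerate (it is not $L_2(4)$, $\binom{m}{2}=16$ has no integer solution, and $K_{8\times 2}$ has parameters $(16,14,12,14)$), so your case analysis as written does not cover it and the proof is formally incomplete. The omission is easily repaired: since $v=16=2k-\lambda+2$, Lemma \ref{smallv} shows this graph is OK, which is precisely how the paper treats it (row $\overline{6}$ of the table in Section \ref{sec:primitive}).

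Everything else checks out. Your direct argument for the cocktail party graphs --- deleting vertices leaves a complete multipartite graph, which is connected unless only one part survives, in which case every component is a singleton --- is correct and slightly more self-contained than the paper's route through Lemma \ref{smallv} (which also applies, since $K_{m\times 2}$ has $2k-\lambda+2=2m+2>v$). The numerical verifications of \eqref{kSRGeq} for the Petersen graph and of $\lambda=\mu$, $k\geq 2\lambda+1$ for the Shrikhande graph are right, and the boundary cases $T(4)$, $T(5)$ are correctly routed to the cocktail party case and Lemma \ref{smallv} respectively. Once the $(16,10,6,6)$ case is added, the argument is complete and agrees with the paper's proof.
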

\begin{proof}
By Seidel's classification (see \cite{BH1,BM,S}), a strongly regular graph with $\theta_v = -2$ is one of the following:
the complement of the ladder graph, a lattice graph, the Shrikhande graph, a triangular graph, one of the three Chang graphs, the Petersen graph, the Clebsch graph and the Schl\"{a}fli graph.
By Lemma \ref{smallv}, the complement of the ladder graph is OK.
By Lemma \ref{lattice}, a lattice graph is OK.
By Lemma \ref{triangular}, the triangular graphs $T(m)$ is not OK, where $m \geq 6$.
By the above table and Example \ref{28graph}, one of the three Chang graphs, the Petersen graph, the Clebsch graph and the Schl\"{a}fli graph are OK.
This completes the proof of the proposition.
\end{proof}

\section{Final Remarks}\label{sec:final}

In this paper, we have shown that Brouwer's Conjecture is false in general by showing that strongly regular graphs obtained from copolar spaces or $\Delta$-spaces form infinite families of counterexamples. It would be interesting to determine the best general lower bound for $\kappa_2(G)$ when $G$ is a strongly regular graph. Note that the parameter $\kappa_2(G)$ of a strongly regular graph $G$ does not only depend on the parameters of $G$, but also on its structure, as the triangular graph $T(8)$ and the three Chang graphs $C_1,C_2, C_3$ (which are all $(28,12,6,4)$-SRGs), show: $\kappa_2(T(8))=15<16=\kappa_2(C_i)$ for each $1\leq i\leq 3$. 

The symplectic graphs $Sp(2r,q)$ over $\mathbb{F}_q$ show that the gap between the connectivity conjectured by Brouwer and the actual connectivity can be arbitrarily large. For the other three counterexamples coming from copolar spaces: the triangular graphs $T(m)$, the hyperbolic quadric graphs $O^{+}(2r,2)$ and the elliptic quadric graphs $O^{-}(2r,2)$, this gap is exactly $1$. For all of these counterexamples $G$, the value of $\kappa_2(G)$ equals the size of the neighborhood of a clique (corresponding to a hyperbolic line of the space). It would be interesting to see if for every counterexample, the minimum disconnecting set is the neighborhood of a clique. Although Brouwer's Conjecture is false, we believe it is interesting problem to classify to find the value of $\kappa_2(G)$ for other strongly regular graphs. In view of Proposition \ref{kSRG}, we believe Brouwer's Conjecture is true for all $(v,k,\lambda,\mu)$-SRGs having $k\geq 2\lambda+1$.

As mentioned in the first section, Brouwer and Koolen \cite{BK} have recently proved that the vertex-connectivity of a distance-regular graph of degree $k$ equals $k$. They have also proved that the only disconnecting sets of size $k$ are the neighborhoods of the vertices of the graph. In view of these results, we believe that investigating the value of $\kappa_2(G)$ when $G$ is a distance-regular graph, is an interesting project. As observed by Brouwer and Koolen \cite{BK}, the icosahedron graph which is a distance-regular graph of degree $5$ and order $12$ with intersection array $\{5,2,1;1,2,5\}$ (see \cite{BCN} for more details) can be disconnected into two triangles by removing the $k+1=6$ vertices of a hexagon. In this case, $2k-\lambda-2=10-2-2=6$. Also, the line graph of the Petersen graph which is a distance-regular graph of degree $4$ and order $15$ with intersection array $\{4,2,1;1,1,4\}$ (see \cite{BCN} for more details) can be disconnected into two pentagons by removing an independent set of size $k+1=5$. In this case, $2k-\lambda-2=8-1-2=5$.

Another problem that deserves further exploration is determining the vertex-connectivity of the second subconstituents of strongly regular graphs. If $x$ is a vertex of a $(v,k,\lambda,\mu)$-SRG $G$, then the second subconstituent $G_2(x)$ of $G$ with respect to $x$ is the subgraph of $G$ induced by the vertices at distance exactly $2$ from $x$. It is easy to see that $G_2(x)$ is a $(k-\mu)$-regular graph and it is known that if $G$ is not a complete multipartite graph, then $G_2(x)$ is connected (see \cite{BH1} for an eigenvalue proof of this fact). As observed by Brouwer and Haemers  (see \cite{BH1} p.125 in our version), there are strongly regular graphs $G$ and vertices $x$ of such graphs with the property that the vertex-connectivity of $G_2(x)$ is less than $k-\mu$. The example provided by Brouwer and Haemers is a $(96,76,60,60)$-SRG (the complement of this graph Haemers(4) was constructed by Haemers in his Ph.D. Thesis; see also \cite{BL} $\S$8A) which has $k-\mu=16$ and the second subconstituent of every vertex has vertex-connectivity $15$.  If one could find $(v,k,\lambda,\mu)$-SRGs where the second subconstituent has connectivity less than $k-\mu$ and $\mu>\lambda+2$, then such graph would be a counterexample to the Brouwer's Conjecture (one could obtain a disconnecting set of size less than $2k-\mu<2k-\lambda-2$ by taking the union of $x$, its neighbors and a disconnecting set of $G_2(x)$ of size less than $k-\mu$).

\section*{Acknowledgements}

The authors are grateful to Jonathan Hall for his comments and suggestions regarding the copolar and $\Delta$-spaces and to the referee for his/her very thorough report. The authors thank Andries Brouwer, Robert Coulter, Hans Cuypers, Gary Ebert and Chris Godsil for useful suggestions regarding the topics contained in this paper. Jack Koolen appreciates the comments of Koen Thas concerning regular points in generalized quadrangles. 

This work was partially supported by a grant from the Simons Foundation ($\#209309$ to Sebastian M. Cioab\u{a}). Kijung Kim was supported by the National Research Foundation of Korea Grant funded by the Korean Government[NRF-2010-355-C00002]. The second author's work was done while Kijung Kim was at POSTECH. Kijung Kim also thanks the Faculty of Mathematics, POSTECH, for warm hospitality. Jack H. Koolen was partially supported by the Basic Science Research Program through the National Research Foundation of Korea (NRF) funded by the Ministry of Education, Science and Technology (GrantNo. 2009-0089826).


\begin{thebibliography}{99}

\bibitem {Al} N. Alon, Eigenvalues and expanders, {\em Combinatorica} {\bf 6} (1986), 83--96.

\bibitem {Bose} R. C. Bose, Strongly regular graphs, partial geometries and partially balanced designs, {\em Pacific J. Math.} {\bf 13} (1963), 389--419.

\bibitem {Br1} A.E. Brouwer, Connectivity and spectrum of graphs, {\em CWI Quarterly} {\bf 9} (1996), 37--40.

\bibitem {BCN} A.E. Brouwer, A. Cohen and A. Neumaier, {\em Distance-Regular Graphs}, Ergebnisse der Mathematik und ihrer Grenzgebiete (3) [Results in Mathematics and Related Areas (3)], 18. Springer-Verlag, Berlin, 1989. xviii+495 pp. 

\bibitem {BH2} A.E. Brouwer, {\em Parameters of Strongly Regular Graphs}, list available at {\tt http://www.win.tue.nl/$\sim$aeb/graphs/srg/srgtab.html}.

\bibitem {BH1} A.E. Brouwer and W.H. Haemers, {\em Spectra of Graphs}, 245pp book available at {\tt http://homepages.cwi.nl/$\sim$aeb/math/ipm.pdf}.

\bibitem {BK} A.E. Brouwer and J.H. Koolen, The vertex-connectivity of a distance-regular graph, {\em European J. Combin.} {\bf 30} (2009), 668--673.

\bibitem {BL} A.E. Brouwer and J.H. van Lint, Strongly regular graphs and and partial geometries, in {\em Enumeration and design} (Waterloo, Ont., 1982),  85--122, Academic Press, Toronto, ON, 1984.

\bibitem {BM} A.E. Brouwer and D.M. Mesner, The connectivity for strongly regular graphs, {\em European J. of Combin.} {\bf 6} (1985), 215--216.

\bibitem {Bruck} R. H. Bruck, Finite nets, II: Uniqueness and embedding, {\em Pacific J. Math.} {\bf 13} (1963), 421--457.

\bibitem {Cam} P.J. Cameron, Strongly regular graphs, in {\em Topics in Algebraic Graph Theory} (ed. L. W. Beineke and R. J. Wilson), Cambridge Univ. Press, Cambridge, 2004 (ISBN 0521801974), pp. 203-221.

\bibitem {Cam2} P.J. Cameron, Random strongly regular graphs ?, EuroComb'01 (Barcelona).
{\em Discrete Math.} {\bf 273} (2003), no. 1--3, 103--114.

\bibitem {del} P. Delsarte, An algebraic approach to the association schemes of coding theory, {\em Phillips Research Reports Supplements} (1973), No.10.

\bibitem {F} M. Fiedler, Algebraic connectivity of graphs,
{\em Czechoslovak Math. J.} {\bf  23(98)} (1973), 298--305.

\bibitem {Godsilac} C.D. Godsil, {\em Algebraic Combinatorics}, Chapman and Hall Mathematics Series. Chapman \& Hall, New York,  1993. xvi+362 pp.

\bibitem {GR} C.D. Godsil and G. Royle, {\em Algebraic Graph Theory},
Graduate Texts in Mathematics, 207. Springer-Verlag, New York, 2001. xx+439 pp. 

\bibitem {GR2} C.D. Godsil and G. Royle, Chromatic number and the 2-rank of a graph,
{\em J. Combin. Theory Ser. B}  {\bf 81} (2001), no. 1, 142--149.

\bibitem {H1} W.H. Haemers, Interlacing eigenvalues and graphs, {\em Linear Algebra Appl.} {\bf 226-228} (1995), 593--616.

\bibitem {Hall1} J.I. Hall, Classifying copolar spaces and graphs, {\em Quart. J. Math. Oxford} {\bf 33} (1982), 421--449.


\bibitem {HLS} Y.O. Hamidoune, A.S. Llad\'{o} and O. Serra, An isoperimetric problem in Cayley graphs, {\em Theory Comput. Syst.} {\bf 32} (1999), 507--516.

\bibitem {HMPR} C. Helmberg, B. Mohar, S. Poljak and F. Rendl, A spectral approach to bandwidth and separator problems
in graphs, {\em Linear and Multilinear Algebra}  {\bf 39} (1995), 73--90.


\bibitem {KS} M. Krivelevich and B. Sudakov, Pseudo-random graphs, in {\em More sets, graphs and numbers, Bolyai Soc. Math. Stud.}, 15, Springer, Berlin, (2006), 199--262.

\bibitem {VW} J.H. van Lint and R.M. Wilson, {\em A Course in Combinatorics}, Second edition. Cambridge University Press, Cambridge, 2001. xiv+602 pp. ISBN: 0-521-00601-5 

\bibitem{McKS} B. McKay and E. Spence,
Classification of regular two-graphs on 36 and 38 vertices,
{\em Australas. J. Combin.} {\bf 24} (2001), 293--300.

\bibitem {Neumaier} A. Neumaier, Strongly regular graphs with smallest eigenvalue $-m$, {\em Arch. Math. (Basel)} {\bf 33} (1979), 392--400.

\bibitem{PayneThas} S.E. Payne and J.A. Thas, {\em Finite generalized quadrangles}, Second edition. EMS Series of Lectures in Mathematics. European Mathematical Society (EMS), Z\"{u}rich, 2009. xii+287 pp. ISBN: 978-3-03719-066-1.

\bibitem{RoWe} J.J. Rotman and P.M. Weichsel, Simple Lie algebras and groups, {\em J. Algebra} {\bf 169} (1994), 775--790.

\bibitem {S} J.J. Seidel, Strongly regular graphs with (-1, 1, 0) adjacent matrix having eigenvalue 3, {\em Linear Algebra Appl.} {\bf 1} (1968), 281--298.

\bibitem {S1} J.J. Seidel, Strongly regular graphs, in {\em Surveys in combinatorics} (Proc. Seventh British Combinatorial Conf., Cambridge, 1979),  pp. 157-180,
London Math. Soc. Lecture Note Ser., 38, Cambridge Univ. Press, Cambridge-New York, (1979).

\bibitem {Sh} E. Shult, Groups, Polar Spaces and Related Structures, {\em Combinatorics, Part 3: Combinatorial group theory} (Proc. Advanced Study Inst., Breukelen, 1974),  pp. 130--161. {\em Math. Centre Tracts}, No. 57, Math. Centrum, Amsterdam,  (1974).

\bibitem {TaWa} Z.M. Tang and  Z.X. Wan, Symplectic graphs and their automorphisms,
{\em European J. Combin.} {\bf 27} (2006), 38--50. 

\bibitem {Tan} R. M. Tanner, Explicit concentrators from generalized $N$-gons, {\em SIAM J. Algebraic Discrete Methods} {\bf 5} (1984), 287--293.

\end{thebibliography}
\end{document}